\numberwithin{equation}{section}
\newtheorem{theorem}{theorem}[section]
\newtheorem{theo}[theorem]{Theorem}
\newtheorem{proposition}[theorem]{Proposition}
\newtheorem{lemma}[theorem]{Lemma}
\theoremstyle{definition}
\newtheorem{definition}[theorem]{Definition}
\DeclareMathOperator{\Stab}{\mu - Stab}
\DeclareMathOperator{\stab}{Stab}
\newcommand{\cali}[1]{\mathscr{#1}}
\newcommand{\supp}{{\rm supp}}
\newcommand{\Jac}{{\rm Jac}}
\newcommand{\vol}{{\rm vol}}
\newcommand{\luc}{\textrm{luc}}
\newcommand{\C}{\mathbb{C}}
\newcommand{\N}{\mathbb{N}}
\renewcommand{\P}{\mathbb{P}}
\renewcommand{\epsilon}{\varepsilon}
\title[Ramification current, post-critical normality and  stability.]
{Ramification current, post-critical normality and  stability of holomorphic endomorphisms of $\P^k$}
\author{Fran\c cois Berteloot and Maxence Brevard}
\address{Universit\'e Toulouse 3, 
Institut Math\'ematique de Toulouse, 
118 route de Narbonne,
F-31062 Toulouse Cedex 9, France. }
\email{francois.berteloot@math.univ-toulouse.fr}
\email{maxence.brevard@math.univ-toulouse.fr}
\date{}
\begin{document}

\begin{abstract}
In the context of holomorphic families of endomorphisms of $\P^k$,
we prove that stability in the sense of \cite{BBD} is equivalent to
a summability condition for the post-critical mass and to the convergence of  a suitably defined ramification current.
This allows us to both simplify the approach of \cite{BBD}   and better relate stability to post-critical normality.
\end{abstract}

\maketitle

\medskip\medskip

\noindent
{\bf MSC 2020:} 
37F44 - 37F80 - 32H50 - 32U40.
 \medskip

\noindent

{{\bf Keywords:} holomorphic dynamics; dynamical stability; ramification current.}


\section{Introduction and results}

A \emph{holomorphic family of degree $d\ge 2$ endomorphisms on $\P^k$}, parametrized by a complex manifold $M$, is a holomorphic map $f:M\times \P^k\to M\times \P^k$, of the form $(\lambda,z)\mapsto (\lambda,f_\lambda(z))$ such that, as endomorphisms on $\P^k$, the maps $f_\lambda$ have the same algebraic degree
$d\ge 2$.\\
In dimension $k=1$,  the dynamical stability of such families has been independently studied by
Ma\~{n}\'e-Sad-Sullivan \cite{MSS} and Lyubich \cite{LYU}.
At the heart of their theory stands  a characterization  of global dynamical stability by a post-critical normality condition encoding the dynamical stability of  the critical set.
By considering a cover of the parameter space, one may always assume that
the critical points are marked, that is to say that the critical set
$C_f$ of $f$ is given by the graphs of $2d-2$ holomorphic maps $c_j : M\to \P^1$.
Under this mild assumption, this characterization can be stated as follows.

\begin{theo}\label{TheoMSSL}Let $f:M\times \P^1\to M\times \P^1$ be a holomorphic family of rational functions
with marked critical points $c_j$ and let $J_\lambda$ denote the Julia set of $f_\lambda$.
    Then the two following assertions are equivalent:
\begin{itemize}
\item[i)] the Julia sets $J_\lambda$ move holomorphically with $\lambda$,
\item[ii)] the sequences $(f^{\circ n} \circ c_j)_n$ are normal.
\end{itemize}
\end{theo}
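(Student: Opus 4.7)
The plan is to establish the two implications separately along the lines of the original Ma\~n\'e--Sad--Sullivan argument.

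For (ii) $\Rightarrow$ (i), the strategy is to build a holomorphic motion of $J_{\lambda_0}$ out of the repelling periodic skeleton. First, each repelling periodic point of $f_{\lambda_0}$ extends to a holomorphic section $\lambda \mapsto p(\lambda)$ of periodic points of $f_\lambda$ near $\lambda_0$, by the implicit function theorem, since the multiplier at a repelling cycle differs from $1$. Next, one must show that on a sub-neighborhood $D$ these sections stay repelling and have pairwise disjoint graphs. The key input is Fatou's theorem that every non-repelling cycle of $f_\lambda$ attracts the forward orbit of some critical point: should one section become non-repelling at some $\lambda_{*} \in D$, or two sections collide producing a parabolic cycle, the resulting critical attraction near $\lambda_{*}$ would force $(f_\lambda^n \circ c_j)_n$ to be non-normal, contradicting (ii). Finally, the $\lambda$-lemma of Ma\~n\'e--Sad--Sullivan extends these countably many disjoint sections (whose values are dense in $J_{\lambda_0}$) to a holomorphic motion $h : D \times J_{\lambda_0} \to \P^1$; equivariance $h_\lambda \circ f_{\lambda_0} = f_\lambda \circ h_\lambda$ holds on the repelling skeleton and extends by continuity, giving $h_\lambda(J_{\lambda_0}) = J_\lambda$.

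For (i) $\Rightarrow$ (ii), I would work on a disk $D \subset M$ around $\lambda_0$ on which the holomorphic motion $h$ exists, and choose three pairwise disjoint holomorphic sections $p_1, p_2, p_3 : D \to \P^1$ corresponding to three distinct repelling cycles of $f_\lambda$. The aim is to show that the maps $\lambda \mapsto f_\lambda^n(c_j(\lambda))$ avoid $p_1, p_2, p_3$ on $D$, after possibly shrinking, so that Montel's theorem with three separated holomorphic avoided targets yields normality of $(f_\lambda^n \circ c_j)_n$. The analysis splits into two cases. If $c_j$ is identically pre-periodic to one of these cycles, the family reduces to finitely many holomorphic sections and normality is immediate. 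Otherwise each coincidence set $\{f_\lambda^n \circ c_j = p_i\}$ is discrete in $D$; using the equivariance of $h$ (which rigidly controls the orbit of $c_j(\lambda_0)$ when it lies in $J_{\lambda_0}$, while placing its forward orbit in the Fatou set otherwise) one rules out accumulation of such coincidences at $\lambda_0$, after which $D$ can be shrunk to eliminate them entirely and Montel applies.

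The main obstacle is Step 2 of the first implication: exploiting (ii) to exclude both the collision of two repelling cycles and the loss of repulsivity of an individual cycle. These are the only mechanisms by which the dynamics on $J_\lambda$ could fail to remain conjugate to that on $J_{\lambda_0}$, and controlling them via normality of critical orbits is the essence of the MSS phenomenon. By comparison, the $\lambda$-lemma extension and the Montel-based reverse implication are routine once this rigidity is in place.
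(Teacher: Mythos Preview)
The paper does not actually prove Theorem \ref{TheoMSSL}. It is stated in the introduction as classical background, attributed to Ma\~n\'e--Sad--Sullivan \cite{MSS} and Lyubich \cite{LYU}, and serves only to motivate the higher-dimensional results that follow. So there is no ``paper's own proof'' to compare against.

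That said, your outline for (ii) $\Rightarrow$ (i) is the standard MSS route and is essentially correct: follow repelling cycles by the implicit function theorem, use Fatou's theorem on non-repelling cycles attracting a critical orbit to show (via the normality hypothesis) that no cycle becomes indifferent and no two cycles collide, then close up with the $\lambda$-lemma. This is fine.

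Your argument for (i) $\Rightarrow$ (ii) is shakier. You want to apply Montel by showing that $\lambda \mapsto f_\lambda^{n}(c_j(\lambda))$ avoids three moving repelling periodic sections, but the justification you give for this avoidance is vague. The sentence ``using the equivariance of $h$ \dots one rules out accumulation of such coincidences at $\lambda_0$'' is not an argument: equivariance of the motion on $J_\lambda$ says nothing directly about where a critical orbit that starts in the Fatou set will land for nearby $\lambda$, and if $c_j(\lambda_0)\in J_{\lambda_0}$ you have not explained why the holomorphically moving critical orbit must coincide with the motion $h_\lambda(c_j(\lambda_0))$ rather than drift off it. A cleaner route is to observe that (i) forces the number of attracting cycles of $f_\lambda$ to be locally constant (the conjugacy on $J_\lambda$ prevents new attractors from appearing), and then argue that failure of (ii) would, by the usual Montel/renormalization mechanism, produce nearby parameters with extra attracting cycles; alternatively, argue directly that an active critical point creates a Misiurewicz parameter at which the holomorphic motion of $J_\lambda$ must break. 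Either of these replaces the hand-waving in your second half.
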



The fact that the Julia sets $J_\lambda$ move holomorphically with $\lambda$ amounts to say that there exists a holomorphic lamination
$\cali L$ by graphs over $M$ in $M\times \P^1$ whose slices ${\cali L}_\lambda:={\cali L} \cap (\{\lambda\}\times \P^1)$ coincide with $J_\lambda$ for every $\lambda \in M$;
in  that case the family $f$ is said to be \emph{stable}.
More generally, the  \emph{stability locus} $\stab(f)$ of a family $f$ is the set of parameters $\lambda \in M$ admitting
a neighbourhood $D$ such that the  restricted family $f\vert_{D\times \P}$ is stable.
An important corollary of the above theorem is the density of the stability locus in the parameter space.
Another one, due to DeMarco \cite{DM}, is that the Lyapunov exponents $L(\lambda)$ of the maximal entropy measures of $f_\lambda$ define a plurisubharmonic function on $M$ which is pluriharmonic exactly on $\stab(f)$.\\

When studying  stability of families of endomorphisms of $\P^{k\ge 2}$,
basic one-dimensional tools are no longer efficient, and a  different  approach,
mainly  based on pluripotential and ergodic techniques, has recently  been 
designed  in \cite{BBD}. To describe it, we first briefly recall some basic facts and definitions.
For any parameter $\lambda \in M$, denote by $\mu_\lambda$ the equilibrium measure of $f_\lambda$ and by $J_\lambda := \supp(\mu_\lambda)$ its Julia set.
The space $\mathcal O(M,\P^k)$ of holomorphic maps from $M$ to $\P^k$  is endowed with the topology of local uniform convergence;
which yields a complete metric space
$(\mathcal O(M,\P^k),d_{\luc})$ on which the family $f$ induces a topological dynamical system  $(\mathcal O(M,\P^k),\cali F)$:
$$\forall \gamma \in \mathcal O(M,\P^k), \forall \lambda \in M \::\: \cali F(\gamma)(\lambda) := (f_\lambda \circ \gamma)(\lambda).$$
The following (possibly empty) $\cali F$-invariant subset of $ \mathcal O(M,\P^k)$ is also of importance
$$\cali J := \{\gamma \in \mathcal O(M,\P^k) \: : \: \gamma(\lambda) \in J_\lambda \text{ for every } \lambda \in M\}.$$

A  key idea is to replace a holomorphic motion of Julia sets by a holomorphic motion of
the equilibrium measures, meaning by that the existence of a holomorphic lamination $\cali L$ by graphs over $M$ in $M\times \P^k$ whose
slices ${\cali L}_\lambda:={\cali L} \cap (\{\lambda\}\times \P^k)$ have full $\mu_\lambda$ measure for every $\lambda \in M$.
Such a lamination can be further required to be induced by a  relatively compact subset of $\cali J$
on which $\cali F$ induces a $d^k$ to $1$ map and, in that case, is called an \emph{equilibrium lamination}.
This leads to  the following definition  of stability.  
    
\begin{definition}
A family $f$ is said to be \emph{$\mu$-stable} if it admits an  equilibrium lamination. The $\mu$-\emph{stability locus}  of a family $f$, denoted $\Stab(f)$, is the set of parameters $\lambda \in M$ for which there exists a neighbourhood $D$ in $M$ such that  the restricted family $f\vert_{D\times \P^k}$ is $\mu$-stable.
\end{definition}

 Although some post-critical normality conditions  played a crucial role in the work \cite{BBD}, 
they were not directly related to stability. We aim here to both introduce a new notion of  post-critical normality and show that it
naturally implies $\mu$-stability. 
As we shall explain in section \ref{Section2}, this allows to simplify the overall scheme of proof  of \cite{BBD}. To this purpose, we borrow the concept of \emph{ramification current} from the work
{\cite{DS2}, see also \cite{DS}}, of Dinh and Sibony on the equidistribution of iterated preimages
for endomorphisms on $\P^k$ and polynomial-like maps, 
and adapt it to the setting of holomorphic families.

\begin{definition}
Let $C_f$ denote the critical set of $f$. For every integer $n \ge 0$, we denote by $R_{n,f}$ the closed positive current on $M\times \P^k$ defined
by 
$$R_{n,f}:=d^{-kn} (f^{\circ n})_* [ f(C_f)]$$
where $[f(C_f)]$ denotes the current of integration on the analytic subset $f(C_f)$. 
The \emph{formal ramification current} $R_f$  of $f$ is defined by
$$R_f :=\sum_{n\ge 0} R_{n,f}=\sum_{n\ge 0} d^{-kn} (f^{\circ n})_* [ f(C_f)].$$
The \emph{convergence domain} $\Omega(R_f)$ of $R_f$ is the  subset of points in $M\times\P^k$
which admit a neighbourhood $U$ such that the series $\sum_{n\ge 0} \Vert \mathds{1}_U R_{n,f}\Vert$ converges.
\end{definition}

Let us stress that $R_f$ induces a closed positive $(1,1)$-current on its convergence domain $\Omega(R_f)$.\\

We can now state our main result.

\begin{theo}\label{MainTheo} Let $f:M\times \P^k\to M\times \P^k$  be a holomorphic family of endomorphisms
of degree $d\ge 2$ on $\P^k$ and $\pi_M : M\times \P^k \to M$ be  the canonical projection.
Then: $$\Omega(R_f)=\pi_M^{-1} ({\Stab}(f)) =\pi_M (\Omega(R_f))\times \P^k.$$
\end{theo}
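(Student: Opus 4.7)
My plan is to establish the two inclusions $\pi_M^{-1}(\Stab(f))\subseteq\Omega(R_f)$ and $\pi_M(\Omega(R_f))\subseteq\Stab(f)$; combined with the trivial inclusion $\Omega(R_f)\subseteq\pi_M(\Omega(R_f))\times\P^k$, they produce simultaneously both equalities of the theorem, including the cylinder structure $\Omega(R_f)=\pi_M(\Omega(R_f))\times\P^k$. In particular I never need to prove the $\P^k$-invariance of $\Omega(R_f)$ as an independent statement.

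The direction $\pi_M^{-1}(\Stab(f))\subseteq\Omega(R_f)$ is the summability side. Fix $\lambda_0\in\Stab(f)$ and a neighborhood $D$ admitting an equilibrium lamination $\cali L$; since the graphs of $\cali L$ avoid the critical set $C_f$, one can use them to estimate the post-critical mass on tubes. Combined with the cohomological growth bound $\|(f_\lambda^{\circ n})_*[f_\lambda(C_{f_\lambda})]\|_{\P^k}=O(d^{n(k-1)})$, slicing in both the vertical ($\P^k$) and horizontal ($M$) directions yields a bound of the form $\|\mathds{1}_{D\times\P^k}R_{n,f}\|=O(d^{-n})$. This is exactly the summability of the post-critical mass referred to in the abstract, and it immediately gives $D\times\P^k\subseteq\Omega(R_f)$.

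The converse $\pi_M(\Omega(R_f))\subseteq\Stab(f)$ is the main novelty. Starting from $(\lambda_0,z_0)\in\Omega(R_f)$, local summability of $\sum_n\|\mathds{1}_U R_{n,f}\|$ near $(\lambda_0,z_0)$ ensures that the iterated post-critical sets $f^{\circ(n+1)}(C_f)$ do not accumulate on $U$; on a slightly smaller neighborhood $V\Subset U$ one can then lift the inverse branches of $f^{\circ n}$ as graphs of holomorphic sections of $\pi_M$ avoiding $C_f$, giving a uniformly bounded normal family in $\cali J$. Since by Briend--Duval the preimages $\bigcup_n f_{\lambda_0}^{-n}(z_0)$ equidistribute toward $\mu_{\lambda_0}$, these inverse branches furnish holomorphic motions of a $\mu_{\lambda_0}$-generic subset of $J_{\lambda_0}$. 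Taking closures in $(\mathcal O(M,\P^k),d_\luc)$ and using $\cali F$-invariance, one obtains a relatively compact $\cali F$-invariant subset of $\cali J$ on which $\cali F$ acts as a $d^k$-to-one map, i.e.\ an equilibrium lamination on a neighborhood of $\lambda_0$.

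The delicate step is of course the second one. One must propagate the local mass control through the $d^{kn}$ preimages of $(\lambda_0,z_0)$ without losing uniformity, verify that the normal-families limits of inverse branches remain graphs rather than degenerating to vertical components in $M\times\P^k$, and certify compatibility with the equilibrium measure $\mu_{\lambda_0}$. This is precisely where the pluripotential and ergodic machinery of \cite{BBD,DS,DS2} enters essentially; the expected simplification with respect to \cite{BBD} comes from encoding post-critical normality as the local convergence of the positive closed $(1,1)$-current $R_f$, rather than through the weaker auxiliary notions used previously.
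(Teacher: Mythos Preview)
Your logical reduction is correct: the two inclusions $\pi_M^{-1}(\Stab(f))\subset\Omega(R_f)$ and $\pi_M(\Omega(R_f))\subset\Stab(f)$, together with the trivial $\Omega(R_f)\subset\pi_M(\Omega(R_f))\times\P^k$, do yield both equalities at once. But each of your two inclusions has a real gap.

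\textbf{The inclusion $\pi_M^{-1}(\Stab(f))\subset\Omega(R_f)$.} You claim that the graphs of an equilibrium lamination $\cali L$, which indeed avoid $C_f$, let you ``estimate the post-critical mass on tubes'' and, combined with the vertical bound $\|(f_\lambda^{\circ n})_*[C_{f_\lambda}]\|_{\P^k}=O(d^{(k-1)n})$, give $\|\mathds 1_{D\times\P^k}R_{n,f}\|=O(d^{-n})$. This is precisely the point that needs proof, and the lamination does not give it: $\cali L$ controls only a set of full $\mu_\lambda$-measure in each fibre, whereas the mass of $(f^{\circ n})_*[C_f]$ on $D\times\P^k$ contains a horizontal contribution which is, up to a constant, $d^{(k-1)n}\|dd^cL\|_D$ (this is the content of the $dd^cL$-formula, cf.\ \cite[Lemma 3.13]{BBD}). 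So what you really have to show is that $\mu$-stability forces $dd^cL=0$. The paper does not attempt a direct argument here; it passes through the chain $\text{$\mu$-stable}\Rightarrow\text{asymptotically weakly stable}\Rightarrow L\text{ pluriharmonic}\Rightarrow\text{mass bound}$, the middle step using the approximation formula $L(\lambda)=\lim_n d^{-kn}\sum_{z\in\cali R_n(\lambda)}\ln|\Jac f_\lambda(z)|$. Neither the $dd^cL$-formula nor the approximation formula appears in your sketch, and without them I do not see how to close this direction.

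\textbf{The inclusion $\pi_M(\Omega(R_f))\subset\Stab(f)$.} Your outline is in the right spirit---lift inverse branches, appeal to equidistribution---but two specific claims are incorrect or too optimistic. First, ``local summability \dots\ ensures that the iterated post-critical sets $f^{\circ(n+1)}(C_f)$ do not accumulate on $U$'' is false: the summability of masses says nothing about accumulation of supports, and typically $\overline{\cup_n f^{\circ n}(Y)}$ contains the Julia set. The paper sidesteps this by moving $z_0$ (via Baire) off the countable union $\cup_n f^{\circ n}(Y)$ and then working on discs through $a=(\lambda_0,z_0)$; the passage from inverse branches on generic discs to inverse branches on a ball requires the Sibony--Wong extension theorem, which you do not invoke. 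Second, ``taking closures in $(\mathcal O(M,\P^k),d_{\luc})$ \dots\ one obtains \dots\ an equilibrium lamination'' is not how the argument runs: the paper builds an \emph{equilibrium web} (a compactly supported $\cali F$-invariant probability $\cali M$ with $(p_\lambda)_*\cali M=\mu_\lambda$) as a weak limit of Ces\`aro means of inverse-branch Dirac masses, then proves it is \emph{acritical} by a second mass-counting argument (again using Sibony--Wong, in the form of Lemma~\ref{CSW}), and only then appeals to \cite[Theorem~4.1]{BBD} to extract an equilibrium lamination from an ergodic acritical web. Your closure-of-graphs picture does not address why limits of graphs stay disjoint, nor why the resulting object is acritical.
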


As we shall see in the next section, it follows from Theorem \ref{MainTheo} that $\lambda_0\in {\Stab}(f)$ if and only if
$\Vert  (f^{\circ n})_* [C_f]\Vert_{U\times \P^k}= O(d^{(k-1)n})$ for some  neighbourhood $U$ of $\lambda_0$ in $M$.
This last condition already appeared in \cite[Proposition 3.12(3)]{BBD} and, since for every $\lambda$,
$\Vert{  (f_{\lambda}^{\circ n})}_* [C_{f_\lambda}]\Vert_{ \P^k} \sim_n d^{(k-1)n}$,
it can be interpreted as a post-critical normality statement.
In dimension $k=1$, one may indeed check that it is equivalent to the second assertion  of Theorem \ref{TheoMSSL}
 (see \cite[Theorem 3.9, proof of $(5) \implies (6)$]{BB1}).\\

\noindent {\bf Notations:}
The mass of a positive current $T$ on some Borel measurable set $W$ is denoted $\Vert T\Vert_W$ or $\lVert \mathds{1}_W T \rVert$.
The set of all holomorphic maps from $U$ to $V$ is denoted ${\cali O}(U,V)$.
When $V=\C$, we simply note $\cali O(U)$. The euclidean 
ball of radius $r$ and centered at point $a$ in $\C^m$ is denoted $B(a,r)$.
The determinant of the jacobian matrix of an endormorphism $f : \P^k \to \P^k$ (respectively a polynomial map $F:\C^k \to \C^k$)
will be denoted $\Jac f$ (resp. $\Jac F$). The cardinality of a finite set $A$ is denoted $|A|$.

\section{The role of theorem \ref{MainTheo} in the stability theory}\label{Section2}

From now on,  $f$ is a holomorphic family of degree $d\ge 2$ endomorphisms on $\P^k$, parametrized by a complex manifold $M$ of complex dimension $m$.
We recall that the equilibrium measure $\mu_\lambda$ of $f_\lambda$ is a Monge-Amp\`ere mass given by
$\mu_\lambda=(dd^c_{z} g(\lambda,z) +\omega_{\P^k})^k$,
where $g$ is a Green function (see \cite[Theorem 1.16]{DS}), which is continuous on $M\times \P^k$.
We denote by $L(\lambda)=\int_{\P^k} \ln \vert \Jac f_\lambda(z)\vert\; d\mu_\lambda(z)$
the sum of Lyapunov exponents of $(f_\lambda,\mu_\lambda)$.
For every integer $n$ and every parameter $\lambda$, we denote by  ${\cali R}_n(\lambda)$
the set of $n$-periodic repelling points of $f_\lambda$ which belong to the Julia set of $f_\lambda$. We now introduce some weaker notions of stability.

\begin{definition}
A family $f$ is said to be \emph{weakly stable} (resp. \emph{asymptotically weakly stable}) if,
for every compact subset $M_0 \Subset M$, there exists a sequence of holomorphic laminations
$({\cali L}_n)_\N$ by graphs over $M$ in $M_0\times \P^k$ such that
$({\cali L}_n)_\lambda:= {\cali L}_n \cap (\{\lambda\}\times \P^k)= {\cali R}_n(\lambda)$ 
(resp.  $({\cali L}_n)_\lambda \subset {\cali R}_n(\lambda)$ and $|({\cali L}_n)_\lambda|  \sim_n
|{\cali R}_n(\lambda)|$) for every $\lambda \in M_0$.
\end{definition}

The following result combines those of \cite{BBD} with Theorem \ref{MainTheo}.
It both offers a much direct proof of some results of \cite{BBD} and completes them with a sharper characterization of stability in terms of post-critical normality.

\begin{theo}\label{theoStab}
Let $f:M\times \P^k\to M\times \P^k$  be a holomorphic family of endomorphisms of degree $d\ge 2$
on $\P^k$ parametrized by a simply connected complex manifold $M$. Then the following assertions are equivalent:
\begin{itemize}
\item[1)] $L$ is pluriharmonic on $M$,
\item[2)] $\Vert f^{\circ n}_* [C_f]\Vert_{U\times \P^k}= O(d^{(k-1)n})$
for every $U \Subset M$,
\item[3)] $\pi_M(\Omega(R_f))=M$, 
\item[4)] $f$ is {$\mu$}-stable,
\item[5)] $f$ is asymptotically weakly stable.
\end{itemize}
\end{theo}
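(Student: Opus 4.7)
The plan is to organize the five equivalences as a cycle built around Theorem \ref{MainTheo}, using it together with the simple connectedness of $M$ to bring in conditions $2)$ and $3)$, and to invoke the existing results of \cite{BBD} for the equivalences relating $\mu$-stability to pluriharmonicity of $L$ and to asymptotic weak stability.

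I would first handle $3)\Leftrightarrow 4)$. By Theorem \ref{MainTheo}, $\pi_M(\Omega(R_f))={\Stab}(f)$, so condition $3)$ is the same as ${\Stab}(f)=M$. The direction $4)\Rightarrow 3)$ is immediate from the definition of ${\Stab}(f)$. For the converse, one must upgrade a cover of $M$ by $\mu$-stable open sets to a single global equilibrium lamination; this amounts to analytic continuation of the local equilibrium laminations along paths in $M$, the simple connectedness of $M$ preventing any monodromy obstruction. This gluing step is morally identical to the corresponding one in \cite{BBD} and can be imported without essential modification.

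For $2)\Leftrightarrow 3)$ I would apply the remark immediately following Theorem \ref{MainTheo}, which gives the local equivalence $\lambda_0\in{\Stab}(f)\Leftrightarrow\Vert(f^{\circ n})_*[C_f]\Vert_{U\times\P^k}=O(d^{(k-1)n})$ on some neighbourhood $U$ of $\lambda_0$. Quantifying this over all $\lambda_0\in M$ and extracting finite subcovers shows $2)\Leftrightarrow{\Stab}(f)=M\Leftrightarrow 3)$. The local remark itself rests on the identity $R_{n+1,f}=d^{-k}f_*R_{n,f}$ and the comparison $f_*[C_f]\geq[f(C_f)]$ of positive currents, which together make $\Vert R_{n,f}\Vert_{U\times\P^k}$ comparable, up to the factor $d^{-kn}$, to $\Vert(f^{\circ(n+1)})_*[C_f]\Vert_{U\times\P^k}$.

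Finally, $4)\Leftrightarrow 1)$ and $4)\Leftrightarrow 5)$ I would invoke from \cite{BBD} as black boxes: $\mu$-stability forces pluriharmonicity of $L$ via the bifurcation current formalism, and its equivalence with asymptotic weak stability is part of the main theorem of that paper. The main technical obstacle in the above route is the extraction of the sharp $O(d^{(k-1)n})$ bound from mere summability of $\Vert R_{n,f}\Vert_{U\times\P^k}$: naive summability of $d^{-kn}\Vert(f^{\circ n})_*[f(C_f)]\Vert$ only yields $o(d^{kn})$. The argument must exploit the slicewise rigidity $\deg C_{f_\lambda}=(k+1)(d-1)$, which forces the slicewise mass to grow exactly like $d^{(k-1)n}$, so that only the horizontal part of the mass on $M\times\P^k$ can grow faster than $d^{(k-1)n}$, and that horizontal part is precisely what the convergence of the ramification series controls.
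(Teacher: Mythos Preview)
Your scheme is circular within the logic of this paper. You build the equivalence $3)\Leftrightarrow 4)$ and $2)\Leftrightarrow 3)$ on Theorem~\ref{MainTheo} and on the remark following it, but in the paper Theorem~\ref{MainTheo} is \emph{derived from} Theorem~\ref{theoStab}: ``Applying Theorem~\ref{theoStab} to the restricted families $f\vert_{\Stab\times\P^k}$ then easily leads to Theorem~\ref{MainTheo}.'' Likewise the remark you invoke is explicitly flagged as a consequence to be established in Section~\ref{Section2}, i.e.\ through the proof of Theorem~\ref{theoStab} itself. So you are using the target statement to prove the target statement. The actual engine replacing Theorem~\ref{MainTheo} in the paper is Lemma~\ref{LemmaTech}, which goes directly from $\lambda_0\in\pi_M(\Omega(R_f))$ to the existence of an acritical equilibrium web on a neighbourhood of $\lambda_0$; Theorem~\ref{MainTheo} never enters the proof of Theorem~\ref{theoStab}.

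The paper's route also differs from yours on the remaining implications. The equivalence $1)\Leftrightarrow 2)$ is obtained directly from the $dd^cL$-formula via the mass identity $\Vert f^{\circ n}_*[C_f]\Vert_{U\times\P^k}=\alpha\Vert dd^cL\Vert_U+O(d^{(k-1)n})$ (\cite[Lemma 3.13]{BBD}); this is precisely what resolves the ``technical obstacle'' you raise in your last paragraph, and it does so without any appeal to Theorem~\ref{MainTheo}. The step $2)\Rightarrow 3)$ is then trivial. For $3)\Rightarrow 4)$ the paper uses Lemma~\ref{LemmaTech} to produce a local acritical web, upgrades it to an ergodic one via Choquet theory, and then invokes \cite[Theorem~4.1]{BBD} to obtain a local equilibrium lamination, with analytic continuation over simply connected $M$ giving the global one. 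Finally $4)\Rightarrow 5)$ is quoted from Bianchi \cite[Theorem~4.11]{Bia} (not \cite{BBD}), and $5)\Rightarrow 1)$ is proved in the paper itself via the approximation formula, writing $L$ as an $L^1_{\loc}$ limit of pluriharmonic functions; neither $4)\Leftrightarrow 1)$ nor $4)\Leftrightarrow 5)$ is taken wholesale as a black box from \cite{BBD}.
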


Let us stress that in \cite{BBD} the assertions $1$) and $2$) were shown to be equivalent to weak stability
in the family of all degree $d\ge 2$ holomorphic endomorphisms of $\P^k$ or for any family
when $k=2$, while in \cite{Bia}, using further techniques, Bianchi proved the equivalence of assertions
$1$), $4$), $5$) and a version of $2$) in the larger setting of  holomorphic families of polynomial maps of large topological degree.\\

We now recall some fundamental tools from  \cite{BBD} which will also  be useful  here.
Our main thread is that the stability  properties are encoded in the Lyapunov function $L(\lambda)$,
from which they can be extracted by mean of the two following formulas:\\
\begin{itemize}[label={}, leftmargin=*]
    \item  $\boldsymbol{dd^c L}$\textbf{-formula}:
    $dd^cL=\pi_{M*}\left((dd^c_{\lambda,z} g(\lambda,z) +\omega_{\P^k})^k\wedge[C_f]\right)$.
    \item \textbf{Approximation formula}:
    $L(\lambda)=\lim_n d^{-kn}\sum_{z\in{\cali R}_n(\lambda)} \ln \vert {\rm Jac} f_\lambda (z)\vert$.
\end{itemize}~

The first formula has been obtained by Bassanelli and Berteloot in \cite{BB},
it generalizes similar formulas in dimension one due to Przytycki \cite{Prz} and Manning \cite{Man}  for polynomials 
and DeMarco  \cite{DM} for rational functions. The second one was proved by Berteloot, Dupont and Molino in \cite{BDM} and a simplified proof,
avoiding difficulties due to the possible resonances between the Lyapunov exponents, has been given by Berteloot and Dupont in \cite{BDu}.\\


We shall also need the notion of \emph{  acritical equilibrium web} whose existence, as shown in \cite[Theorem 4.1]{BBD},
implies that of an equilibrium lamination and, in principle, is much  easier  to prove.
The projections $p_\lambda : \mathcal O(M,\P^k) \to \P^k $ are defined by $p_\lambda(\gamma):=\gamma(\lambda)$.
\begin{definition}\label{defi web} An \emph{equilibrium web for $f$ on $M$} is a Borel probability measure on  the metric space $(\mathcal O(M,\P^k),d_\luc)$ such that:
    \begin{enumerate}
        \item $\cali{F}_*\cali {M}=\cali {M}$,
        \item $\forall \lambda \in M \: : \: (p_{\lambda})_*\cali{M}=\mu_\lambda$,
        \item $\supp (\cali M)$ is compact.
    \end{enumerate}
Set $\cali J_s := \{\gamma \in \cali J \: : \: \Gamma_\gamma \cap (\cup_{m\ge 0} f^{-m}(\cup_{n\ge 0}f^n(C_f))) \ne \emptyset \}$.
An equilibrium web $\cali M$ is said  \emph{acritical} if $\cali M(\cali J_s) = 0$.
    \end{definition}

In \cite{BBD} an equilibrium web was also required to be supported on $\cali J$, this is actually a consequence of the
above simpler definition. Indeed, if $\gamma_0\notin \cali J$ then there exists $\lambda_0\in M$ for which
$\gamma_0(\lambda_0) \notin \textrm{supp}\; \mu_{\lambda_0}$ and then, taking any neighbourhood $\mathcal V_0$
of $\gamma_0$ in ${\cali O}(M,\P^k)$ such that $p_{\mathcal\lambda_0} (\mathcal V_0)\subset \P^k \setminus  \textrm{supp}\;
\mu_{\lambda_0}$, one gets ${\cali M}(\mathcal V_0) \le {\cali M}(p_{\lambda_0}^{-1}(p_{\lambda_0}(\mathcal V_0))) =
(p_{\lambda_0*}{\cali M) (p_{\mathcal \lambda_0}(\mathcal V_0)) = \mu_{\lambda_0} (p_{\lambda_0}(\mathcal V_0))=0}$.\\

We now show that Theorem \ref{theoStab} can be obtained from the following key lemma, whose proof
 will be given in the next section.
Applying Theorem \ref{theoStab} to the restricted families $f\vert_{\Stab \times \P^k}$
then easily leads to Theorem \ref{MainTheo}.

\begin{lemma}\label{LemmaTech} If $\lambda_0\in \pi_M (\Omega(R_f))$, then there exists a neighbourhood $D_0$ of $\lambda_0$
    in $M$ such that the restricted family $f\vert_{D_0\times \P^k}$ admits an acritical equilibrium web.
\end{lemma}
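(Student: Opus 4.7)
I would construct the acritical equilibrium web on a small polydisk $D_0$ around $\lambda_0$ by averaging Dirac masses on univalent inverse branches of $f^{\circ n}$ above a generic point $z_0 \in \P^k$. This is the Briend--Duval--Dinh--Sibony strategy adapted to the parametric setting of \cite{BBD}, and the summability of $R_f$ serves precisely to guarantee that the fraction of inverse branches that survive as global sections over $D_0$ stays close to $1$.

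\textbf{Setup and branches.} From $\lambda_0 \in \pi_M(\Omega(R_f))$, I fix $z_0$ with $(\lambda_0,z_0)\in \Omega(R_f)$ and take a simply connected product neighbourhood $D_0\times B$ of $(\lambda_0,z_0)$ on which $S:=\sum_{n\ge 0}\lVert \mathbf{1}_{D_0\times B}R_{n,f}\rVert$ is finite; shrinking, I arrange $S$ to be small and $z_0$ to be a Dinh--Sibony equidistribution point of $\mu_{\lambda_0}$. For each $n$ the preimage $\Sigma_n:=(f^{\circ n})^{-1}(D_0\times\{z_0\})$ is an analytic multigraph of total degree $d^{kn}$ over $D_0$; because $D_0$ is simply connected, each irreducible component of $\Sigma_n$ unramified over $D_0$ is the graph of a univalent inverse branch $\gamma:D_0\to\P^k$ with $f_\lambda^{\circ n}\circ\gamma(\lambda)\equiv z_0$. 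Let $\cali G_n$ be the collection of these branches. A lost component corresponds to a meeting of $\Sigma_n$ with the critical locus $C_{f^{\circ n}}=\bigcup_{j<n}f^{-j}(C_f)$, and intersection-theoretic bookkeeping (pushing forward the relevant intersections by $f^{\circ n}$) gives the key estimate $d^{kn}-\lvert\cali G_n\rvert \le C\,d^{kn}\sum_{j<n}\lVert R_{j,f}\rVert_{D_0\times B}$. Thus the sub-probability measures
$$\cali M_n \;:=\; d^{-kn}\sum_{\gamma\in\cali G_n}\delta_\gamma$$
on $\cali O(D_0,\P^k)$ have mass at least $1-CS$.

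\textbf{Limit and invariance.} The graphs $\Gamma_\gamma$ of the branches in $\cali G_n$ are analytic subvarieties of $D_0\times\P^k$ of pure dimension $\dim M$ with uniformly bounded area, so by Bishop's compactness theorem the supports of the $\cali M_n$ lie in a common compact subset of $\cali O(D_0,\P^k)$ for the luc topology. A weak-$*$ subsequential limit $\cali M$ then exists. Dinh--Sibony equidistribution of $d^{-kn}(f_{\lambda_0}^{\circ n})^*\delta_{z_0}$ toward $\mu_{\lambda_0}$, combined with $\lvert\cali M_n\rvert\le 1$, forces $\lvert\cali M\rvert=1$ and $(p_{\lambda_0})_*\cali M=\mu_{\lambda_0}$; the inverse-branch identity $\cali F_*\cali M_n\approx\cali M_{n-1}$ yields $\cali F$-invariance of $\cali M$ in the limit, and then $(p_\lambda)_*\cali M=\mu_\lambda$ propagates from $\lambda_0$ to every $\lambda\in D_0$ via the holomorphic dependence of $\mu_\lambda$ on the parameter.

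\textbf{Acriticality (main obstacle).} The delicate step is $\cali M(\cali J_s)=0$. Using $\cali F$-invariance one rewrites $\cali J_s=\bigcup_{m,n\ge 0}\cali F^{-m}\cali K_n$ with $\cali K_n:=\{\gamma:\Gamma_\gamma\cap f^n(C_f)\ne\emptyset\}$, reducing the problem to $\cali M(\cali K_n)=0$ for every $n$. Fiberwise, $(p_\lambda)_*\cali M=\mu_\lambda$ together with the non-charging property $\mu_\lambda\bigl(f_\lambda^n(C_{f_\lambda})\bigr)=0$ gives $\cali M\{\gamma:\gamma(\lambda)\in f_\lambda^n(C_{f_\lambda})\}=0$ for every fixed $\lambda$. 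Lifting this pointwise vanishing to the uncountable union $\cali K_n=\bigcup_{\lambda\in D_0}\{\gamma:\gamma(\lambda)\in f_\lambda^n(C_{f_\lambda})\}$ is the core difficulty of the lemma; one overcomes it by exploiting that every $\gamma\in\supp\cali M$ is a luc-limit of inverse branches $\gamma_{n,i}\in\cali G_n$ whose graphs avoid $C_{f^{\circ n}}$ over all of $D_0$ by construction, so that the limit graphs generically miss the grand orbit of $C_f$.
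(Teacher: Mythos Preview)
Your outline has the right skeleton (inverse branches, averaging, weak limits), but several load-bearing steps are either missing or incorrect, and the paper's proof is organized precisely to avoid the difficulties you gloss over.

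\textbf{The branch-counting estimate.} Your key inequality $d^{kn}-\lvert\cali G_n\rvert \le C\,d^{kn}\sum_{j<n}\lVert R_{j,f}\rVert_{D_0\times B}$ is not justified and, as stated, suffers from a dimension mismatch: $\lVert R_{j,f}\rVert_{D_0\times B}$ is the mass of a $(1,1)$-current on an $(m+k)$-dimensional open set, while you are trying to count how many irreducible components of an $m$-dimensional multigraph are ramified over $D_0$. There is no direct comparison between these quantities. The paper circumvents this by first slicing with complex lines $\Delta$ through $a=(\lambda_0,z_0)$ in $\C^{m+k}$: on a one-dimensional disc $\Delta_r$, the lost branches are controlled by the \emph{number of points} in $f^{\circ n}(Y)\cap\Delta_r$, which is exactly $d^{kn}\lVert R_{n,f}\wedge[\Delta_r]\rVert$ (Lemma~\ref{L1}). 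A counting argument (Lemma~\ref{L2}) then shows that most preimages $a_s^n$ of $a$ admit inverse branches along a large measure set of lines, and the Sibony--Wong extension theorem (Theorem~\ref{Sibony-Wong}) glues these one-dimensional branches into branches on a ball $B(a,\tau r)$. This slicing/extension mechanism is the core technical idea and is absent from your proposal.

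\textbf{Projection to $\mu_\lambda$ and invariance.} Your argument that $(p_\lambda)_*\cali M=\mu_\lambda$ ``propagates from $\lambda_0$'' by holomorphic dependence is not valid: your branches satisfy $f_\lambda^{\circ n}(\gamma(\lambda))=z_0$ for \emph{all} $\lambda$, and $z_0$ need not be an equidistribution point for $f_\lambda$ when $\lambda\ne\lambda_0$. The paper fixes this by averaging over $z\in B_0$ (so that for every fixed $\lambda$ one avoids the pluripolar exceptional set almost surely) and by using the ergodicity of $\mu_\lambda$ to pass from an inequality $(p_\lambda)_*\widetilde{\cali M}\le\mu_\lambda$ to equality after normalisation. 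Also, a subsequential limit of $\cali M_n$ need not be $\cali F$-invariant from $\cali F_*\cali M_n\approx\cali M_{n-1}$ alone; the paper takes Ces\`aro means for this reason. Finally, compactness in $(\cali O(D_0,\P^k),d_{\luc})$ comes from Ueda's normality theorem for inverse branches, not Bishop's theorem (whose limits are cycles, possibly not graphs).

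\textbf{Acriticality.} Your final paragraph is not a proof. Saying that ``limit graphs generically miss the grand orbit of $C_f$'' because the approximants avoid $C_{f^{\circ n}}$ is exactly the kind of limit argument that fails: an increasing union of forward images can well catch the limit. The paper's argument is quantitative and again uses the ramification current: for each $p$ one estimates $\cali M^n(Y_p)$ by the number of branches $\gamma_s^n$ whose image on $B(a,\tau'\tau r)$ meets $f^{\circ p}(Y)$; a second application of Sibony--Wong (Lemma~\ref{CSW}) shows that each such branch meets $f^{\circ p}(Y)$ along a set of lines of $\cali L$-measure $>\frac12$, whence $\cali M^n(Y_p)\le 2\,d^{kp}\int_{\cali D}\lVert R_{n+p,f}\wedge[\Delta_{\tau r}]\rVert$, which tends to $0$ by the summability Lemma~\ref{LemmaSeries}. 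This is where the hypothesis $(\lambda_0,z_0)\in\Omega(R_f)$ is used a second, essential time; your proposal uses it only once, for the branch count, and then has nothing left to prove acriticality.
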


\proof[Proof of Theorem \ref{theoStab}]

$1) \Leftrightarrow 2).$ This follows immediately from the following estimate which  is a direct consequence of the $dd^c L$-formula
(see \cite[Lemma 3.13]{BBD}).
There exists a positive constant $\alpha$, only depending on $k$ and $m$,
such that
$$\Vert f^n_* [C_f]\Vert_{U\times \P^k}= \alpha \Vert dd^c L\Vert_U + O(d^{(k-1)n})$$
for every relatively compact open subset $U$ of $M$.\\

$2)\Rightarrow 3)$ is obvious.\\

$3) \Rightarrow  4).$ By Lemma \ref{LemmaTech}, any parameter $\lambda_0\in M$ has a neighbourhood $D_0$
such that the restricted family $f\vert_{D_0\times \P^k}$ admits an acritical equilibrium web $\cali M$.
Using Choquet's theory, one shows that $f\vert_{D_0\times \P^k}$ also admits an acritical web which is ergodic
(see \cite[Proposition 2.4]{BBD}).
It then follows from \cite[Theorem 4.1]{BBD} that $f\vert_{D_0\times \P^k}$ admits an equilibrium lamination;
the proof of this theorem fully exploits the ergodicity of the dynamical system $({\cali J},{\cali F},{\cali M})$.
Finally, since $M$ is simply connected, the family
$f$ itself admits an equilibrium lamination $\cali L$ by an analytic continuation argument.\\
 
$4) \Rightarrow  5).$ This has been proved by Bianchi in the wider context of holomorphic families of polynomial-like maps
of large topological degree (see \cite[Theorem 4.11]{Bia}).
The proof consists on a generalization to the dynamical system $({\cali J},{\cali F},{\cali M})$, where $\cali M$
is an ergodic acriticalequilibrium web associated to the lamination $\cali L$, of the strategy developed
by Briend and Duval \cite{BD1} to recover the equidistribution of the repelling cycles from the properties of the equilibrium measure.\\

 $5) \Rightarrow  1).$ For every integer $n$ we have,  by assumption,
a finite collection $(\gamma_{j,n})_{1\le j\le N_n}$ of holomorphic maps
$\gamma_{j,n} : M \to \P^k$ such that $\gamma_{j,n}(\lambda)  \in {\cali R}_n(\lambda)$ for every $\lambda \in M$
and $N_n \sim d^{kn}$. The collection is invariant by the action of $\cali F$. We thus may write
$$d^{-kn}\sum_{z\in{\cali R}_n(\lambda)} \ln \vert {\rm Jac} f_\lambda (z)\vert=
d^{-kn}\sum_{1\le j\le N_n} \ln \vert {\rm Jac} f_\lambda (\gamma_{j,n}(\lambda)\vert +
d^{-kn}\sum_{z\in{\cali R'}_n(\lambda)} \ln \vert {\rm Jac} f_\lambda (z)\vert,$$
where ${\cali R'}_n(\lambda):= {\cali R}_n(\lambda) \setminus \cup_{1\le j\le N_n} \{\gamma_{j,n}(\lambda)\}$.

The second term in the above sum is positive
since the cycles involved in  ${\cali R'}_n(\lambda)$ are repelling, and it is bounded from above
by  $K_\lambda d^{-kn} \vert {\cali R'}_n(\lambda)\vert \le K_\lambda (1-\frac{N_n}{d^{kn}})$
where $K_\lambda$ is a positive constants which depends continuously on $\lambda$. 
Similarly the first term is locally uniformly bounded and, moreover, defines a pluriharmonic function on $M$. 
By the approximation formula, the function $L$  is thus
a limit in $L^1_{loc}(M)$ of pluriharmonic functions.
\qed\\


To end this section we recall the role played by Misiurewicz parameters in the study of stability.
A parameter $\lambda_0\in M$ is called \emph{Misiurewicz} if some repelling periodic point
of $f_{\lambda_0}$ belongs to the post-critical set of $f_{\lambda_0}$ and if this configuration is not stable by small perturbations
(see \cite[Definition 1.5]{BBD} for a precise statement).
Note that Misiurewicz parameters are basic examples of parameters  where, in some sense, the post-critical normality  fails. 

The basic result about Misiurewicz parameters is the following, 
its proof combines the $dd^cL$-formula with an asymptotic  phase-parameter transfer (see \cite[Proposition 3.7]{BBD}).

\begin{proposition}\label{PropMisiu}
 If $dd^c L=0$ on $M$ then  there are no Misiurewicz parameters on $M$.
\end{proposition}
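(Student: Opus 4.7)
I argue by contraposition: assume that a Misiurewicz parameter $\lambda_0 \in M$ exists and aim to show that $dd^c L \not\equiv 0$ on $M$. The plan is to use the $dd^c L$-formula to translate the hypothesis $dd^c L = 0$ into the vanishing of a closed positive current on $M \times \P^k$, and then to use the asymptotic phase--parameter transfer of \cite[Proposition 3.7]{BBD} to exhibit local mass of this current near $(\lambda_0,\sigma(\lambda_0))$, where $\sigma$ is the critical branch implicated in the Misiurewicz configuration.

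First, set $\Theta := (dd^c_{\lambda,z} g(\lambda,z) + \omega_{\P^k})^k \wedge [C_f]$; this is a well-defined closed positive current on $M \times \P^k$ by continuity of $g$. The $dd^c L$-formula gives $dd^c L = \pi_{M*}\Theta$, so if $dd^c L = 0$ on $M$ then $\pi_{M*}\Theta = 0$. Since $\pi_M$ is proper on every compact slab $M_0\times\P^k$ and $\Theta$ is positive, the vanishing of its pushforward forces $\Theta=0$ (a positive current with zero proper pushforward is zero).

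Next, unpack the Misiurewicz assumption: there exist a neighbourhood $U$ of $\lambda_0$, a holomorphic section $\sigma : U \to \P^k$ of the critical set, integers $m\ge 0$, $n\ge 1$, and a holomorphic family $\rho : U \to \P^k$ of repelling $n$-periodic points of $f_\lambda$ with $f_{\lambda_0}^{\circ m}(\sigma(\lambda_0)) = \rho(\lambda_0)$, the coincidence \emph{not} persisting on any neighbourhood of $\lambda_0$. Now apply the asymptotic phase--parameter transfer of \cite[Proposition 3.7]{BBD} near $\rho$: iterated inverse branches of $f^{\circ Nn}$ spread the equilibrium measure $\mu_\lambda$ densely around $\rho(\lambda)$ in each fiber, and pulling back by $f^{\circ m}$ transfers this mass around $\sigma(\lambda)$. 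The non-persistence clause guarantees that the graph of $\sigma$ is not contained in the analytic set $\{f_\lambda^{\circ m}(\sigma(\lambda))=\rho(\lambda)\}$; hence the graph meets these equidistributed pieces in a genuine (non-tangential) way and produces strictly positive $(dd^c g+\omega_{\P^k})^k\wedge[C_f]$-mass in every neighbourhood of $(\lambda_0,\sigma(\lambda_0))$, contradicting $\Theta = 0$.

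The main obstacle lies in this last step: converting the non-persistence of the Misiurewicz coincidence into strictly positive intersection mass for $(dd^c g+\omega_{\P^k})^k\wedge[C_f]$, rather than a degenerate tangential contact that the pushforward could still annihilate. This is precisely what the phase--parameter transfer of \cite[Proposition 3.7]{BBD} delivers: the repelling dynamics of $\rho$ forces the equilibrium measure to accumulate on $\sigma(\lambda)$ uniformly in parameter, so the failure of persistence upgrades pointwise coincidence at $\lambda_0$ to a transverse incidence that endows $\Theta$ with nonzero local mass.
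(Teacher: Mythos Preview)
Your argument has a genuine gap in the step where you deduce $\Theta=0$ from $\pi_{M*}\Theta=0$. The claim ``a positive current with zero proper pushforward is zero'' is only valid for measures, i.e.\ when $\Theta$ has top bidegree; here $\Theta$ has bidegree $(k+1,k+1)$ on the $(m+k)$-dimensional manifold $M\times\P^k$, so as soon as $m\ge 2$ this fails. A concrete counterexample: take $M=\C^2$, $k=1$, and $\Theta=[\{(0,0)\}\times\P^1]$; this positive closed $(2,2)$-current is nonzero, yet for any $(1,1)$-test form $\alpha$ on $M$ one has $\langle\pi_{M*}\Theta,\alpha\rangle=\int_{\{(0,0)\}\times\P^1}\pi_M^*\alpha=0$ because $\pi_M^*\alpha$ vanishes on vertical fibers. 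In fact $\Theta=(dd^c g+\omega_{\P^k})^k\wedge[C_f]$ \emph{always} carries such vertical mass, so the route ``$dd^cL=0\Rightarrow\Theta=0\Rightarrow$ contradiction'' is doomed from the start; your Step~5 would only be rediscovering mass that is there regardless of any Misiurewicz hypothesis.

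The argument in \cite[Proposition 3.7]{BBD} that the paper invokes does not pass through $\Theta=0$ at all. It works directly on $dd^cL$: the $dd^cL$-formula expresses $dd^cL$ as a sum of pullbacks of the Green current by the post-critical maps $\lambda\mapsto f_\lambda^{\circ N}(\sigma(\lambda))$, and the phase--parameter transfer shows that near a Misiurewicz parameter these maps, once renormalized via the linearizing coordinates of the repelling cycle $\rho$, converge to a \emph{non-degenerate} limit (this is exactly where the non-persistence hypothesis enters). A non-degenerate limit forces the pullback of the Green current to have strictly positive mass on a neighbourhood of $\lambda_0$, whence $dd^cL\ne 0$ there. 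So the contrapositive you want comes from producing horizontal mass in $dd^cL$ directly, not from annihilating $\Theta$.
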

This result has been used by several authors to construct holomorphic
families of endomorphisms for which the bifurcation locus $M\setminus \Stab(f)$ has non empty interior
(see \cite{Bi-Ta}, \cite{Du}, \cite{Ta}, \cite{Bie}),
or for estimating the Hausdorff dimension of slices in the bifurcation locus
 (see \cite{BB2}). Let us also stress that, 
using different techniques than in \cite{BBD}, Bianchi generalized Proposition \ref{PropMisiu} to the setting of holomorphic families of
polynomial-like maps of large topological degree (see \cite[Theorem A]{Bia}).

It has also been shown that Misiurewicz parameters are dense in the bifurcation locus 
(see \cite{BBD} Theorem 1.6) and thus, taking Proposition \ref{PropMisiu} into account,
that the the bifurcation locus is the closure of the subset of Misiurewicz parameters.
This result played an important role in the approach of \cite{BBD} and its proof is quite involved.
The proof of Theorem \ref{theoStab} presented here avoids these difficulties.

\section{Proof of Lemma \ref{LemmaTech}}\label{SectionProof}

We set $Y:=f(C_f)$.
Take $a:=(\lambda_0,z_0) \in \Omega(R_f)$ and let $U=:D\times B$ be a neighbourhood of $a$
in $M\times \P^k$ such that  $\sum_{n\ge 0} \Vert \mathds{1}_U R_{n,f}\Vert < +\infty$. Using Baire's theorem, one sees that  $z_0$ might be slightly moved in $B$ so that
$$ a \notin \cup_{n\ge 0} f^{\circ n} (Y).$$
Since the problem is local, we may shrink $D$ (resp. $B$)  so that it is  contained in a local chart of $M$ (resp. $\P^k$) and therefore
assume that $D\times B \subset \C^{m+k}$.
We then denote by $\cali D$ the set of complex lines in $\C^{m+k}$ passing 
through the point $a$ and, for each $\Delta\in {\cali D}$ and every $\rho>0$ , we denote by $\Delta_\rho$
the euclidean disc lying on $\Delta$, centered at $a$ and of radius $\rho$.  We identify $\cali D$ with
$\P^{m+k-1}$ and endow it with a probability measure $\cali L$ induced by the Fubiny-Study metric
on $\P^{m+k-1}$.\\

From now on, we fix $0< \epsilon < \frac{1}{2}$ and set $\delta := \frac{\epsilon}{1-\epsilon}$.
We will construct an acritical equilibrium web for
$f\vert_{D_0\times \P^k}$ where $D_0$ is some neighbourhood of $\lambda_0$ contained in $D$.
We will proceed in {five} steps. The first three are directly inspired by the work of Dinh and Sibony on the equidistribution of iterated preimages
(see \cite[section 3.4]{DS2} or \cite[Section 1.4]{DS}).\\

{\it Step $1$:  Constructing a large set of discs} on which $f^{\circ n}$ admits almost $d^{kn}$ inverse branches.

\begin{lemma}\label{L1} There exist
$r>0$ and ${\cali D}'\subset {\cali D}$ such that:
\begin{itemize}
\item[i)]  $B(a,r)\cap Y = \emptyset$,
\item[ii)] ${\cali L}({\cali D}') >1-\delta$,
\item[iii)] $\sum_{n=0}^{+\infty} \Vert R_{n,f} \wedge [\Delta_r]\Vert \le \epsilon\;
\forall \Delta\in {\cali D}'$,
\item[iv)] for every integer $n$ and every $\Delta \in {\cali D'}$, there exists a set $\Gamma_r^n(\Delta)$ of
inverses branches of $f^{\circ n}$ above $\Delta_r$ such that $\vert \Gamma_r^n(\Delta)\vert \ge (1-\epsilon)d^{kn}$,
\item[v)]  ${\cali F} (\Gamma_r^{n+1}(\Delta)) \subset \Gamma_r^n(\Delta)$ 
and   $\gamma(\Delta_r) \cap ({ C_f \cup} f(C_f))=\emptyset$ for every integer $n$ and every $\gamma \in \Gamma_r^{n}(\Delta)$.
\end{itemize}
\end{lemma}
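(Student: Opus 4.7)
The plan is to adapt to the parameter-family setting the Dinh--Sibony scheme for equidistribution of preimages in \cite{DS2}: a Crofton-type slicing estimate combined with Chebyshev's inequality to produce a large set of good discs, and a Riemann--Hurwitz count of sections above each such disc whose deficit is controlled by the masses $\|R_{n,f}\wedge[\Delta_r]\|$.

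\emph{Parts (i)--(iii).} Using the Baire reduction already performed on $a$, together with the fact that $C_f$ is itself a proper analytic subset, I would first strengthen the hypothesis to $a\notin C_f\cup\bigcup_{n\ge 0}f^{\circ n}(Y)$, and then pick $r_0>0$ such that $\overline{B(a,r_0)}\cap(C_f\cup Y)=\emptyset$. This proves (i) and also forces $\|R_{0,f}\wedge[\Delta_r]\|=\|[C_f]\wedge[\Delta_r]\|=0$ for $r\le r_0$, which will be essential below. For (ii)--(iii) I would apply the Crofton/slicing inequality
\[
\int_{\cali{D}}\|T\wedge[\Delta_r]\|\,d\cali{L}(\Delta)\le C_0\,\|T\|_{B(a,r)}
\]
valid for any closed positive $(1,1)$-current $T$ in a neighbourhood of $\overline{B(a,r_0)}$, with $C_0$ depending only on $m+k$ (the usual Favard-type bound on $\int_{\cali{D}}[\Delta_r]\,d\cali{L}(\Delta)$). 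Applying it to each $R_{n,f}$, summing over $n$ and using $a\in\Omega(R_f)$, the right-hand side tends to $0$ as $r\to 0$; shrinking $r$ so that it is $\le\epsilon\delta/C_0$ and applying Chebyshev produces (ii) and (iii) for the set $\cali{D}':=\{\Delta\in\cali{D}:\sum_n\|R_{n,f}\wedge[\Delta_r]\|\le\epsilon\}$.

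\emph{Parts (iv)--(v).} For $\Delta\in\cali{D}'$, I would define $\Gamma_r^n(\Delta)$ to be the set of holomorphic sections $\gamma:\Delta_r\to M\times\P^k$ of $f^{\circ n}$ whose whole forward orbit $\{(f^{\circ j}\circ\gamma)(\Delta_r)\}_{0\le j\le n-1}$ avoids $C_f\cup Y$. Property (v) is then automatic: for $\gamma\in\Gamma_r^{n+1}(\Delta)$, the composition $f\circ\gamma$ is a section of $f^{\circ n}$ whose orbit is a shift of that of $\gamma$ and so still avoids $C_f\cup Y$, giving $f\circ\gamma\in\Gamma_r^n(\Delta)$. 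For (iv), the preimage curve $V_n:=(f^{\circ n})^{-1}(\Delta_r)$ is a disjoint union of discs projecting properly onto $\Delta_r$ with total degree $d^{kn}$ (by simple connectedness of $\Delta_r$); Riemann--Hurwitz then gives $d^{kn}-\#\{\text{sections}\}\le 2R_n$, where $R_n$ is the intersection number of $\Delta_r$ with the branch divisor of $f^{\circ n}$. The chain rule identity $C_{f^{\circ n}}=\sum_{j=0}^{n-1}(f^{\circ j})^*C_f$, the push-pull relation $(f^{\circ j})_*(f^{\circ j})^*=d^{kj}\,\id$ and the decomposition $f_*[C_f]=e\,[Y]$ (with $e=\deg(f|_{C_f}:C_f\to Y)$) collectively rewrite $R_n$ as a $d^{kn}$-weighted combination of the $\|R_{j,f}\wedge[\Delta_r]\|$'s. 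A parallel argument, using the projection formula to count $V_n\cap(f^{\circ j})^{-1}(C_f\cup Y)$ in terms of $Y\cap(f^{\circ \ell})^{-1}(\Delta_r)$, yields the same type of bound for the number of sections whose orbit meets $C_f\cup Y$. Combining, $|\Gamma_r^n(\Delta)|\ge d^{kn}(1-C_1\epsilon)$ with $C_1=C_1(k,m,e)$ independent of $n$, and replacing the starting $\epsilon$ by $\epsilon/C_1$ delivers (iv).

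\emph{Main obstacle.} The delicate point is the bookkeeping in the second part: both the Riemann--Hurwitz count and the bad-section count must be expressed as $d^{kn}$ times a combination of the $\|R_{j,f}\wedge[\Delta_r]\|$'s with weights $d^{k(j-n)}\le 1$, so that they collapse into the unweighted sum bounded by (iii). This is precisely where the normalization $R_{n,f}=d^{-kn}(f^{\circ n})_*[Y]$ built into the definition, and the vanishings $\|[C_f]\wedge[\Delta_r]\|=\|R_{0,f}\wedge[\Delta_r]\|=0$ secured in part one, conspire: without them, uncontrolled contributions from low iterates or from $C_f$ itself would break the uniform-in-$n$ estimate.
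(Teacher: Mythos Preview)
Your overall scheme is the Dinh--Sibony one the paper follows, but the execution differs in two places and one of them contains a real slip.

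\medskip
\textbf{The slicing inequality.} The bound $\int_{\cali D}\|T\wedge[\Delta_r]\|\,d\cali L(\Delta)\le C_0\,\|T\|_{B(a,r)}$ is false: for $T=[H]$ with $H$ a hyperplane through $a$ the left side is $\ge 1$ while the right is $O(r^{2(m+k-1)})$. What Crofton actually gives is $\int_{\cali D}[\Delta]\,d\cali L(\Delta)=(dd^c\log|z-a|)^{m+k-1}$, so that $\int_{\cali D}\|R_f\wedge[\Delta_r]\|=\|R_f\wedge(dd^c\log|z-a|)^{m+k-1}\|_{B(a,r)}$. This \emph{is} finite (because each $R_{l,f}$ has pluriharmonic, hence bounded, potential near $a$ --- this is exactly the content of the paper's Lemma~\ref{LemmaSeries}) and then tends to $0$ as $r\to 0$ by dominated convergence; so your Chebyshev step is rescued. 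With this correction your route to (i)--(iii) is actually cleaner than the paper's: there one first applies a fixed-$r$ slicing lemma (Lemma~\ref{1.4.9}) to produce $\cali D'$, then runs an exhaustion $\cali D'(N)=\{\Delta:\sum_{n\ge N}\|R_{n,f}\wedge[\Delta_r]\|\le\epsilon\}$ to force a uniform tail, and finally shrinks $r$ to kill the first $N$ terms via $a\notin\bigcup_{n\le N}f^{\circ n}(Y)$.

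\medskip
\textbf{The branch count.} For (iv)--(v) the paper avoids Riemann--Hurwitz and your push--pull bookkeeping altogether. It simply observes that $d^{kn}\epsilon_n^\Delta:=d^{kn}\|R_{n,f}\wedge[\Delta_r]\|$ is the number of points of $f^{\circ n}(Y)\cap\Delta_r$ and runs a one-line induction: if $\nu_n$ inverse branches of $f^{\circ n}$ over $\Delta_r$ avoid $Y$, then $f$ is an unramified $d^k$-cover over their images, producing $d^k\nu_n$ branches of $f^{\circ(n+1)}$, of which at most $d^{k(n+1)}\epsilon_{n+1}^\Delta$ meet $Y$; hence $\nu_n\ge\bigl(1-\sum_{i\le n}\epsilon_i^\Delta\bigr)d^{kn}$ directly. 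This delivers (iv) and (v) with no constant $C_1(k,m,e)$ to absorb and no need to invoke the degree $e$ of $f|_{C_f}$ or the chain-rule identity for $C_{f^{\circ n}}$. Your approach works too, but the induction is both shorter and more transparent about why the count in (iv) is governed exactly by the series in (iii).
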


\begin{proof}
Take $r>0$ small enough so that the closed euclidean ball  $\overline{B(a,r)}$ in $\C^{m+k}$ is contained in $U$.
{We shall use the following standard fact  (see \cite[Lemma 1.53]{DS}).}

\begin{lemma}\label{1.4.9}
Let $S$ be a positive closed $(1,1)$-current on $U$. Then there exist a family
$\cali D' \subset \cali D$ with $\cali L(\cali D') > 1-\delta$ and a constant $A_\delta > 0$ which is independant of $S$,  such that  the measures $S\wedge [\Delta_r]$ are well-defined and
of mass $\le A_\delta ||S||$ for every $\Delta$ in $\cali D'$.
\end{lemma}

Applying Lemma \ref{1.4.9} for $S=\mathds{1}_U R_f$, we get $A_\delta > 0$ and a family of lines $\cali D' \subset \cali D$
satisfying ii) and such that $\mathds{1}_U R_f \wedge [\Delta_{r}]$ is well defined as a closed positive current with mass less than
$A_\delta\cdot||\mathds{1}_U R_f||$ for any $\Delta \in {\cali D'}$. Removing some $\cali L$-negligible subset  from $\cali D'$ 
allows to assume that the currents $\mathds{1}_U R_{n,f} \wedge [\Delta_{r}]$ are well defined for all $n\in \N$.
This forces the  series $ \displaystyle \sum_{n \in \N} \left\lVert \mathds{1}_U R_{n,f} \wedge [\Delta_{r}] \right\rVert$ to converge
for any line $\Delta \in \cali D'$.  Indeed, for any $N\in \N$ we have:
\begin{equation*}
    \begin{split}
        \sum_{n=0}^N \left\lVert \mathds{1}_U R_{n,f} \wedge [\Delta_{r}] \right\rVert & = \Vert\sum_{n=0}^N  \mathds{1}_U R_{n,f} \wedge [\Delta_{r}] \Vert\\ 
        & \le  ||\mathds{1}_U R_f \wedge [\Delta_{r}]|| \le A_\delta\cdot||\mathds{1}_U R_f||  < +\infty.
    \end{split}
\end{equation*}

Let us set $\cali D'(N) := \{\Delta \in \cali D' \::\: N(\Delta) \leq N \}$, where $N(\Delta)$ is the smallest positive integer for which
$ \sum_{n \ge N(\Delta)} \left\lVert \mathds{1}_U R_{n,f} \wedge [\Delta_{r}] \right\rVert \le \epsilon$.
As the union $\cali D' = \cup_{N\in \N} \cali D'(N)$ is increasing,
we may assume, after replacing ${\cali D'}$ by $\cali D'(N)$ for $N$ big enough, that 
\begin{equation}\label{queue}
\forall \Delta\in {\cali D'}: \: \sum_{n \ge N} \left\lVert \mathds{1}_U R_{n,f} \wedge [\Delta_{r}] \right\rVert \le \epsilon.
\end{equation}

Since $a\notin \cup_{n\le N} f^{\circ n}(Y)$, we may reduce $r$ so that $$B(a,r) \cap \cup_{n\le N} f^{\circ n}(Y) = \emptyset.$$
It follows that  i) is satisfied and that $R_{n,f} \wedge [\Delta_r] = 0$ for any $n \le N$.
Combining this with (\ref{queue}) yields iii).

Let us establish  iv). Set $\epsilon_n^\Delta := ||\mathds \mathds{1}_U R_{n,f} \wedge [\Delta_r]||$.
Note that  $d^{kn} \epsilon_n^\Delta$ is the cardinality of the  intersection of $f^n(Y)$ with $\Delta_r$, counting multiplicities and that, in particular, $\epsilon_n^\Delta=0$ for $n\le N$.
Given $\Delta \in \cali D'$, we shall prove by induction on $n$ that $f^{\circ n}$ admits at least $\nu_n^\Delta$ 
inverse branches which do  not meet $Y$ on $\Delta_r$ where
\begin{equation}\label{nun}
\nu_n^\Delta \ge (1 - \sum_{i=0}^n \epsilon_i^\Delta ) d^{kn}.
\end{equation}

The base case is covered by i). For the induction step, assume the hypothesis for $f^{\circ n}$.
As $f$ realizes an unramified covering of degree $d^k$, $f^{n+1}$ admits at least 
$\left(1 - \sum_{i=0}^n \epsilon_i^\Delta \right) d^{k(n+1)}$ inverse branches above $\Delta_r$.
Among them, at most $d^{k(n+1)} \epsilon_{n+1}^\Delta$ do intersect $Y$, which leads to the desired property for $f^{\circ(n+1)}$.
Now iii) and (\ref{nun}) immediately yield the announced  estimate $\nu_n^\Delta \ge (1-\epsilon)d^{kn}$.

The assertion  v) directly follows from the above construction.
\end{proof}

{\it Step $2$: Estimating the number of points in $(f^{\circ n})^{-1}(a)$ which belong to a lot of inverse branches
of the form $(f^{\circ n})^{-1}(\Delta_r)$ given by step 1.}

\begin{lemma}\label{L2} Let $r>0$ be given by Lemma \ref{L1}. Set $(f^{\circ n})^{-1}(a)=:\{a_1^n,\cdots,a_{l_n}^n\}$ and 
let $I_n:{\cali D}'\to {\cali P}\{1,2,\cdots, l_n\}$ be the map which associates to any $\Delta\in {\cali D'}$
the subset of $s\in \{1,2,\cdots, l_n\}$ such that there exist an inverse branch of  $f^{\circ n}$ defined on  $\Delta_r$ and passing through $a_s^n$.

Set ${\cali D}_s^{r,n}:= \{\Delta \in \cali D' \: : \: s\in I_n(\Delta) \}$ and 
${\cali S}_{\epsilon,r,n}:=\{1\le s\le l_n\;\colon\; {\cali L}({\cali D}_s^{r,n}) \ge 1-2\sqrt{\epsilon}\}$, for  $1\le s\le l_n$.
Then:
\begin{itemize}
\item[i)] $(1-\sqrt{\epsilon})d^{kn} \le \vert {\cali S}_{\epsilon,r,n}\vert \le d^{kn}$;
\item[ii)] there exists  $i_n : \cali S_{\epsilon, r, n} \to \cali S_{\epsilon, r, n-1}$ such that
$f(a_s^n) = a_{i_n(s)}^{n-1}$ and  $|i_n^{-1}(\{s\})| \le d^k$, for any $s\in \cali S_{\epsilon, r, n}$;
\item[iii)] the sequence $(d^{-kn}|\cali S_{\epsilon, r, n}|)_n$ 
converges.
\end{itemize} 
\end{lemma}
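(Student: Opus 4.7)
The plan is to prove the three assertions in order, treating (i) as the substantive step and (ii)--(iii) as formal consequences. Before anything, I would note that since $a\notin\bigcup_{n\ge 0}f^{\circ n}(Y)=\bigcup_{m\ge 1}f^m(C_f)$, no point in any fiber $(f^{\circ n})^{-1}(a)$ has a critical iterate, so $l_n=d^{kn}$ for every $n$. This gives the trivial upper bound in (i) and will be used implicitly throughout.

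For the lower bound in (i), I would run a Fubini/averaging argument. Each $\gamma\in\Gamma_r^n(\Delta)$ lands at some $a_s^n=\gamma(a)$, so $|I_n(\Delta)|\ge|\Gamma_r^n(\Delta)|\ge(1-\epsilon)d^{kn}$ for $\Delta\in\cali D'$ by Lemma \ref{L1}(iv). Exchanging the order of summation,
\[
\sum_{s=1}^{d^{kn}} \cali L(\cali D_s^{r,n})
=\int_{\cali D'} |I_n(\Delta)|\, d\cali L(\Delta)
\ge (1-\epsilon)d^{kn}\,\cali L(\cali D')
\ge (1-\epsilon)(1-\delta)d^{kn}=(1-2\epsilon)d^{kn},
\]
using $\delta=\epsilon/(1-\epsilon)$, which is precisely what forces $(1-\epsilon)(1-\delta)=1-2\epsilon$. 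Splitting the sum between indices in $\cali S_{\epsilon,r,n}$ (contribution $\le|\cali S_{\epsilon,r,n}|$) and those outside (contribution $\le(1-2\sqrt\epsilon)(d^{kn}-|\cali S_{\epsilon,r,n}|)$) and solving yields $|\cali S_{\epsilon,r,n}|\ge(1-\sqrt\epsilon)d^{kn}$, as desired.

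For (ii), given $s\in\cali S_{\epsilon,r,n}$ and any $\Delta\in\cali D_s^{r,n}$, take the inverse branch $\gamma$ of $f^{\circ n}$ over $\Delta_r$ with $\gamma(a)=a_s^n$; then $f\circ\gamma$ is an inverse branch of $f^{\circ(n-1)}$ over $\Delta_r$ passing through $f(a_s^n)$. Defining $i_n(s)$ by $f(a_s^n)=a_{i_n(s)}^{n-1}$, we obtain the inclusion $\cali D_s^{r,n}\subset\cali D_{i_n(s)}^{r,n-1}$, hence $\cali L(\cali D_{i_n(s)}^{r,n-1})\ge 1-2\sqrt\epsilon$ and $i_n(s)\in\cali S_{\epsilon,r,n-1}$. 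The fiber bound $|i_n^{-1}(\{s\})|\le d^k$ is immediate from $|f^{-1}(a_s^{n-1})|\le d^k$.

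For (iii), combining the fact that $i_n$ lands in $\cali S_{\epsilon,r,n-1}$ with the fiber bound gives
\[
|\cali S_{\epsilon,r,n}|=\sum_{s\in\cali S_{\epsilon,r,n-1}}|i_n^{-1}(\{s\})|\le d^k\,|\cali S_{\epsilon,r,n-1}|,
\]
so $(d^{-kn}|\cali S_{\epsilon,r,n}|)_n$ is non-increasing; bounded below by $1-\sqrt\epsilon$ from (i), it converges. The only genuinely delicate step is the Fubini bookkeeping in (i)---in particular, the tuning of $\delta=\epsilon/(1-\epsilon)$ so that the average content of the sets $\cali D_s^{r,n}$ beats the threshold $1-2\sqrt\epsilon$ on enough indices. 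Steps (ii) and (iii) are then purely combinatorial consequences of the semi-conjugacy structure provided by $f$.
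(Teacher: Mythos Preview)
Your proof is correct and follows essentially the same approach as the paper: the Fubini/double-counting argument for (i) with the split over $\cali S_{\epsilon,r,n}$ and its complement, and the combinatorics for (ii)--(iii), match the paper's proof line by line. Your argument for (ii) via the inclusion $\cali D_s^{r,n}\subset\cali D_{i_n(s)}^{r,n-1}$ is slightly more explicit than the paper's terse appeal to Lemma~\ref{L1}(v), and your observation that $l_n=d^{kn}$ (rather than just $l_n\le d^{kn}$) is correct but not needed for the estimates.
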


\begin{proof}

We first establish the following estimate:
\begin{eqnarray}\label{estimateFsn}
\sum_{s=1}^{l_n} {\cali L} ({\cali D}_s^{r,n}) \ge (1-\delta)(1-\epsilon)d^{kn}.
\end{eqnarray}

Let $\nu$ be the counting measure on $\{1,2,\dots,l_n\}$ and set $ \widetilde{\cali L} := \nu \otimes \cali L$.
Consider the following subset $Q_n := \{(s,\Delta) \::\: \{s\} \in I_n(\Delta)\}$ of $\{1,2,\dots,l_n\} \times \cali D'$.
We compute $\widetilde{\cali L}(Q_n)$ with two different partitions of $Q_n$.
The first one  is based on the value of $s \in \{1,2,\dots,l_n\}$:
\begin{equation}\label{counting}
\widetilde{\cali L}(Q_n) = \widetilde{\cali L}\left( \bigcup_{s=1}^{l_n} \{s\} \times {\cali D}_s^{r,n}\right)
        = \sum_{s=1}^{l_n} \widetilde{\cali L} \left( \{s\} \times {\cali D}_s^{r,n}\right) = \sum_{s=1}^{l_n} \cali L ( {\cali D}_s^{r,n} ).
\end{equation}
The second one  is based on the value of $I_n(\Delta) \in {\cali P}\{1,2,\cdots, l_n\}$:
$$
\widetilde{\cali L}(Q_n) = \widetilde{\cali L}\left( \bigcup_{\cali S \in \cali P \{1,2,\dots,l_n\}} \cali S \times I_n^{-1}(\{\cali S\}) \right)
= \sum_{\cali S \in \cali P \{1,2,\dots,l_n\}} \nu(\cali S) \cdot \cali L(I_n^{-1}(\{\cali S\})).
$$
As soon as $|\cali S| < (1-\epsilon)d^{kn}$, we get from Lemma \ref{L1} iv) that $I_n^{-1}(\{\cali S\}) = \emptyset$. Then the last equality leads to 
\begin{eqnarray}\label{Counting}
\widetilde{\cali L}(Q_n) \geq (1-\epsilon)d^{kn} \cali L(\cali D'),
\end{eqnarray}
and \eqref{estimateFsn} immediately follows from \eqref{counting}, \eqref{Counting} and Lemma \ref{L1} ii).
\\
We can now end the proof of the Lemma.
Since $l_n \le d^{kn}$, in order to prove i), it is sufficient to prove the first inequality,
namely that $(1-\sqrt{\epsilon})d^{kn} \leq |\cali S_{\epsilon,r,n}|$.
One has
\begin{equation*}
    \begin{split}
        \sum_{s=1}^{l_n} \cali L(\cali D_s^{r,n}) & = \sum_{s\in \cali S_{\epsilon,r,n}} \cali L(\cali D_s^{r,n}) + \sum_{s\notin \cali S_{\epsilon,r,n}}\cali L(\cali D_s^{r,n})\\
        & \le |\cali S_{\epsilon, r, n}| + (l_n - |\cali S_{\epsilon, r, n}|)(1-2\sqrt{\epsilon}) \\
        & \le 2\sqrt{\epsilon} |\cali S_{\epsilon, r, n}| + (1-2\sqrt{\epsilon})d^{kn},
    \end{split}
\end{equation*}
which, combined with \eqref{estimateFsn}, gives that
$2\sqrt{\epsilon} |\cali S_{\epsilon, r, n}| \ge \left[ (1-\delta)(1-\epsilon) - (1-2\sqrt{\epsilon})\right] d^{kn}$.
Then, by our  choice of $\delta$, we get:
$$|\cali S_{\epsilon, r, n}| \ge \left[ 1- \frac{\epsilon + \delta(1-\epsilon)}{2\sqrt{\epsilon}}\right] d^{kn} = (1-\sqrt{\epsilon})d^{kn},$$
which gives i).

Let us finally justify ii) and iii).
We define the map $i_n$ on $\cali S_{\epsilon, r, n}$ by $f(a_s^n) = a_{i_n(s)}^{n-1}$. It then follows from the assertion v) of Lemma \ref{L1} and the very definition of   $\cali S_{\epsilon, r, n}$ that $i_n( \cali S_{\epsilon, r, n})\subset  \cali S_{\epsilon, r, n-1}$.  Since $f$ is a ramified covering of degree $d^k$, one has $|i_n^{-1}(\{s\})| \le d^k$.

Since $|\cali S_{\epsilon, r, n}| = \sum_{s\in \cali S_{\epsilon,r,n-1}} |i_n^{-1}(s)| \le d^k |\cali S_{\epsilon,r,n-1}|$, the sequence
$(d^{-kn}|\cali S_{\epsilon, r, n}|)_n$ is positive non-increasing and thus converges.
\end{proof}

{\it Step $3:$ Extension of the inverse branches to a ball $B(a,r)$.}\\

We will prove here the existence
of at least $(1-\sqrt{\epsilon})d^{kn}$  inverse branches of $f^{\circ n}$ on an open  ball  $B(a,\tau r) \subset B(a,r)$ centered at $a$
in $\C^{m+k}$. To achieve this, we will  combine  Lemma \ref{L2} with the extension theorem of Sibony-Wong
(see \cite{SW} or \cite[Theorem 1.54]{DS}) which we recall below.

\begin{theo}{\bf (Sibony-Wong)}\label{Sibony-Wong}
 Let $\cali D'' \subset \cali D$ be such that $\cali L(\cali D'')\ge c$ for a positive constant $c$ and
    let $\Sigma$ denote the intersection of  $\cali D''$ with $B(a,r)$.
    Then there exists $\tau \in (0,1)$, independant from $r$ and $\cali D'$, such that any holomorphic function $h$ on a neighbourhood of $\Sigma$ can be extended
    to a holomorphic function $\tilde h$ on $B(a, \tau r)$.
    Moreover, the extended function $\tilde h$ enjoys the following estimate: $\sup_{B(a,\tau r)} \vert \tilde h\vert \leq \sup_\Sigma \vert h\vert$.
\end{theo}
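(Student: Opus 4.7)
The plan is to translate so that $a=0$ and reduce the problem, via a Taylor expansion of $h$ along the lines in $\cali D''$, to a classical estimate for homogeneous polynomials restricted to sets of positive Fubini--Study measure. Set $M:=\sup_\Sigma|h|$. For each unit vector $v$ spanning a line $\Delta\in\cali D''$, one writes the Taylor expansion
\[
h(tv)=\sum_{n\ge 0}P_n(v)\,t^n\qquad\text{for }|t|<r,
\]
where $P_n$ is a homogeneous polynomial of degree $n$ on $\C^{m+k}$, whose coefficients depend only on $h$ and not on the choice of line. Cauchy's inequalities immediately give $|P_n(v)|\le M/r^n$ for every such unit vector $v$.

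The heart of the argument is then a Bernstein-type estimate: there exists a constant $\tau=\tau(c)\in(0,1)$, depending only on $c$, such that for every homogeneous polynomial $P$ of degree $n$ on $\C^{m+k}$,
\[
\sup_{|v|=1}|P(v)|\le\tau^{-n}\sup_{v\in\pi^{-1}(\cali D''),\,|v|=1}|P(v)|,
\]
where $\pi:S^{2(m+k)-1}\to\P^{m+k-1}$ is the canonical projection. I would establish this by exploiting the fact that $\frac{1}{n}\log|P|$ is plurisubharmonic with logarithmic growth at infinity, together with the observation that any set of positive Fubini--Study measure is non-pluripolar; this yields a uniform quantitative comparison of suprema whose dependence on the degree $n$ is precisely of the form $\tau(c)^{-n}$.

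Granted this estimate, one obtains $|P_n(z)|\le M(|z|/\tau r)^n$ for all $z\in\C^{m+k}$, so that the series $\tilde h(z):=\sum_n P_n(z)$ converges absolutely on $B(a,\tau r)$ and defines the sought holomorphic extension of $h$. The sharp bound $\sup_{B(a,\tau r)}|\tilde h|\le M$ does not follow directly from the series estimate, which only yields $|\tilde h(z)|\le M/(1-|z|/\tau r)$; one recovers the constant $1$ either by running the argument with a slightly larger value of $r$ and then restricting, or by applying the maximum principle to the plurisubharmonic function $\log|\tilde h|$ together with the fact that it coincides with $\log|h|\le\log M$ on the foliation $\Sigma\cap B(a,\tau r)$ by discs through $a$. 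The polynomial estimate on positive-measure subsets of $\P^{m+k-1}$ is clearly the main obstacle, as it controls the exact value of $\tau$ in terms of $c$; everything downstream is either Cauchy estimates or a routine series manipulation.
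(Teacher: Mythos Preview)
The paper does not prove this theorem: it is quoted as a classical result, with references to \cite{SW} and \cite[Theorem~1.54]{DS}, and is used as a black box in Lemma~\ref{L3} and Lemma~\ref{CSW}. So there is no ``paper's own proof'' to compare with; what you have written is essentially the original Sibony--Wong argument (Taylor expansion at $a$, Cauchy estimates along the lines of $\cali D''$, a Bernstein-type inequality for homogeneous polynomials on subsets of the sphere of positive projective measure, and termwise summation).

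Two comments on the outline itself. First, the Bernstein step is indeed the whole content, and your justification (``sets of positive Fubini--Study measure are non-pluripolar'') is qualitatively right but quantitatively insufficient: non-pluripolarity alone does not produce a constant $\tau$ depending only on $c$. What is actually needed is Alexander's inequality (or an equivalent $L^2$--$L^\infty$ comparison for homogeneous polynomials on the sphere), which gives a bound of the form $\Vert P\Vert_{S}\le e^{nA(c)}\Vert P\Vert_{E}$ with $A(c)$ explicit in the measure $c$; one then sets $\tau=e^{-A(c)}$. Second, neither of your proposed fixes for the sharp constant $\sup_{B(a,\tau r)}|\tilde h|\le M$ quite works as stated: you cannot enlarge $r$, since $h$ is only given on a neighbourhood of $\Sigma\subset B(a,r)$; and the maximum-principle route requires more than coincidence with $h$ on a positive-measure family of discs (that is essentially the Sibony--Wong statement again, for $\log|\tilde h|$). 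In the paper's applications only the existence of the extension is used, not the sup estimate, so this point is immaterial here; but if you want the constant~$1$, the clean way is to take $\tau$ strictly smaller than Alexander's value so that the geometric series has ratio $<1$ uniformly, and then absorb the resulting constant by a further shrinking of $\tau$ via homogeneity.
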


Our precise statement is as follows; we keep  the notations introduced in Lemma \ref {L2}. 

\begin{lemma}\label{L3}
There exists $\tau\in ]0,1[$ such that  to every $s\in {\cali S}_{\epsilon,r,n}$ is associated an  inverse branch
$\gamma_s^n : B(a,\tau r) \to \C^m\times \P^k$ of $f^{\circ n}$ such that $\gamma_s^n(a)=a_s^n$. Moreover,
 $f \circ \gamma_s^n = \gamma_{i_n(s)}^{n-1}  \text{ for every } s\in \cali S_{\epsilon, r, n}$.
\end{lemma}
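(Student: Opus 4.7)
The plan is to fix $s \in {\cali S}_{\epsilon,r,n}$ and glue the inverse branches $\gamma_{s,\Delta}^n$ produced by Lemma \ref{L2} (for $\Delta \in {\cali D}_s^{r,n}$) into a single holomorphic inverse of $f^{\circ n}$ defined on an open neighbourhood of $\Sigma_s := \bigcup_{\Delta \in \cali D_s^{r,n}} \Delta \cap B(a,r)$, and then to apply the Sibony--Wong extension Theorem \ref{Sibony-Wong}. The lower bound ${\cali L}({\cali D}_s^{r,n}) \geq 1 - 2\sqrt{\epsilon}$ from Lemma \ref{L2} plays the role of the constant $c$ in that theorem, and will produce a $\tau \in (0,1)$ independent of $s$ and $n$.

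To set up the gluing, I would first observe that each image $\gamma_{s,\Delta}^n(\Delta_r)$ avoids the whole critical set $C_{f^{\circ n}}$ of $f^{\circ n}$. Indeed, from the chain-rule identity $C_{f^{\circ n}} = \bigcup_{j=0}^{n-1}(f^{\circ j})^{-1}(C_f)$, together with the fact that $f^{\circ j} \circ \gamma_{s,\Delta}^n$ belongs to $\Gamma_r^{n-j}(\Delta)$ by iterating item v) of Lemma \ref{L1}, one gets $f^{\circ j}(\gamma_{s,\Delta}^n(\Delta_r)) \cap C_f = \emptyset$ for every $0 \leq j \leq n-1$. By the inverse function theorem, $\gamma_{s,\Delta}^n$ therefore extends to a holomorphic inverse $\tilde\gamma_{s,\Delta}^n$ of $f^{\circ n}$ on an open tubular neighbourhood $V_\Delta$ of $\Delta_r$ in $\C^{m+k}$. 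Since all these extensions agree with the unique local inverse branch of $f^{\circ n}$ at $a$ sending $a$ to $a_s^n$ (which exists because $a_s^n \notin C_{f^{\circ n}}$), the identity principle glues them into a single holomorphic inverse $h_s$ of $f^{\circ n}$ on the open neighbourhood $V_s := \bigcup_{\Delta} V_\Delta$ of $\Sigma_s$.

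Next, I would fix a local chart of $M \times \P^k$ centered at $a_s^n$ so that $h_s$ takes values in $\C^{m+k}$, and apply Theorem \ref{Sibony-Wong} componentwise to obtain a holomorphic extension $\gamma_s^n : B(a, \tau r) \to \C^{m+k}$. The sup-norm estimate of Sibony--Wong, combined with compactness (and a possible initial shrinking of $r$), guarantees that the image of $\gamma_s^n$ remains inside the chosen chart, so $\gamma_s^n$ indeed takes values in $\C^m \times \P^k$. The equation $f^{\circ n} \circ \gamma_s^n = \id$ holds on the non-empty open set $V_s \cap B(a, \tau r)$ by construction, hence on the whole connected ball $B(a, \tau r)$ by analytic continuation; thus $\gamma_s^n$ is an inverse branch of $f^{\circ n}$, with $\gamma_s^n(a) = h_s(a) = a_s^n$.

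Finally, the compatibility $f \circ \gamma_s^n = \gamma_{i_n(s)}^{n-1}$ would follow from uniqueness of local inverses: $f \circ \gamma_s^n$ is a holomorphic inverse branch of $f^{\circ(n-1)}$ on $B(a,\tau r)$ sending $a$ to $f(a_s^n) = a_{i_n(s)}^{n-1}$, a non-critical point of $f^{\circ(n-1)}$ again by Lemma \ref{L1} v), and therefore coincides with the extension produced by the same construction at index $n-1$. The main technical obstacle is ensuring that the extended map $\gamma_s^n$ remains inside a local chart of $\P^k$ where the inverse of $f^{\circ n}$ is meaningful, uniformly in $n$ and $s$; this is precisely what the $L^\infty$ control built into the Sibony--Wong theorem is designed to handle, provided the chart is chosen near $a_s^n$ rather than near $a$.
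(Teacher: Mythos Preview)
Your proposal is correct and follows essentially the same route as the paper: glue the one-dimensional inverse branches $\gamma_{s,\Delta}^n$ into a single germ near $a$, observe they extend to a neighbourhood of $\Sigma_s$, and then invoke Sibony--Wong (Theorem~\ref{Sibony-Wong}) with $c=1-2\sqrt{\epsilon}$ coming from the bound $\cali L(\cali D_s^{r,n})\ge 1-2\sqrt{\epsilon}$ in Lemma~\ref{L2}; the compatibility $f\circ\gamma_s^n=\gamma_{i_n(s)}^{n-1}$ is then obtained by uniqueness of inverse branches at $a$. You are somewhat more explicit than the paper on two points (using Lemma~\ref{L1}~v) together with the chain rule to show the image avoids $C_{f^{\circ n}}$, and checking by analytic continuation that the Sibony--Wong extension still satisfies $f^{\circ n}\circ\gamma_s^n=\id$), but these are elaborations of the same argument rather than a different method. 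The chart issue you raise---ensuring the extended map lands in a fixed affine piece of $\P^k$---is handled in the paper not inside the proof of Lemma~\ref{L3} but later, via Ueda's normality result (Lemma~\ref{remU}) and a further shrinking of $r$; your ``possible initial shrinking of $r$'' is the same device.
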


\begin{proof}
Fix $n\in \N$ and $s\in \cali S_{\epsilon, r, n}$.
For every  $\Delta \in {\cali D}_s^{r,n} = \{\Delta \in \cali D' \: : \: s \in I_n(\Delta)\})$,
we  denote by $\gamma_{s,\Delta}^n$ the inverse branch of $f^{\circ n}$ above $\Delta_r$ such that $\gamma_{s,\Delta}^n(a) = a_s^n$.
Observe that the holomorphic maps $\gamma_{s,\Delta}^n$ and $\gamma_{s,\Delta'}^n$
are actually defined on neighbourhoods of $\Delta_r$ and $\Delta'_r$,
and coincide on some neighbourhood of $a$, for any pair of lines $\Delta$, $\Delta' \in \cali D_s^{r,n}$.
By analytic continuation, we can therefore consider the branch $\gamma_s^n$ as defined on a neighbourhood of $\cup_{\Delta \in \cali D_s^{r,n}} \Delta_r$.
    
By definition of   $\cali S_{\epsilon, r, n}$ (see Lemma \ref{L2}),
one has $\cali L(\cali D_s^{r,n}) \ge 1- 2\sqrt{\epsilon}$ and we may therefore apply
Theorem \ref{Sibony-Wong} with $c = 1-2\sqrt{\epsilon}$ and
$\cali D'' = \cali D_s^{r,n}$ to the coordinate functions of $\gamma_s^n$.
In this way, one sees that each $\gamma_s^n$ extends to an inverse branch of $f^{\circ n}$ defined on $B(a,\tau r)$.

 When  $s\in \cali S_{\epsilon, r, n}$, the map  $f \circ \gamma_s^n$ is an inverse branch of $f^{\circ(n-1)}$ which is defined on $B(a,\tau r)$ and 
    whose value at $a$ is  $f(a_s^n) = a_{i_n(s)}^{n-1}$. It must therefore coincide with the branch $\gamma_{i_n(s)}^{n-1}$ which,
by  assertion ii) of Lemma \ref{L2} and the above construction, does exist.
\end{proof}

The following lemma is also a consequence of the Sibony-Wong extension theorem, we shall use it in the fifth (and last) step.
\\
\begin{lemma}\label{CSW} Fix $\rho > 0$. For $u \in {\cali O}(B(a,\rho))$ we set
$\cali{D}_u:=\{\Delta \in {\cali D} \;\colon\; 0 \notin u (\Delta_\rho)\}$.
There exists $0<\tau'<1$ such that  $0 \notin u(B(a,\tau' \rho))$  as soon as ${\cali L}(\cali{D}_u) >\frac{1}{2}$.
\end{lemma}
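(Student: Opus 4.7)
The plan is to reduce the statement to the Sibony--Wong extension theorem (Theorem \ref{Sibony-Wong}) applied not to $u$ itself but to $1/u$, and then to use analytic continuation to globalize the identity $u\cdot(1/u)=1$.

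First, I note that the hypothesis $\cali L(\cali D_u)>\tfrac12$ in particular forces $\cali D_u\neq\emptyset$, so there exists at least one line $\Delta\in\cali D_u$. Since $a$ is the center of $\Delta_\rho$ and $u$ does not vanish on $\Delta_\rho$, we have $u(a)\neq 0$; consequently $1/u$ is holomorphic on some open neighbourhood of $a$ in $B(a,\rho)$. More generally, set $Z:=u^{-1}(0)$, which is an analytic subset of $B(a,\rho)$. For every $\Delta\in\cali D_u$ the disc $\Delta_\rho$ is disjoint from $Z$, hence $\Sigma:=\bigcup_{\Delta\in\cali D_u}\Delta_\rho$ is contained in the open set $B(a,\rho)\setminus Z$, on which $1/u$ is holomorphic. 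This is precisely the setup required by Sibony--Wong.

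Next, I apply Theorem \ref{Sibony-Wong} with $c=\tfrac12$, $\cali D''=\cali D_u$ and $h=1/u$: this produces a constant $\tau'\in (0,1)$, depending only on $c=\tfrac12$ (and in particular independent of $u$ and $\rho$), together with a holomorphic extension $v\in\cali O(B(a,\tau'\rho))$ of $1/u$. Since $v$ coincides with $1/u$ on the open neighbourhood of $a$ where $u$ is non-vanishing, the holomorphic function $uv-1$ on $B(a,\tau'\rho)$ vanishes on an open subset, so by the identity principle $uv\equiv 1$ on $B(a,\tau'\rho)$. This forces $u$ to be non-vanishing throughout $B(a,\tau'\rho)$, which is the conclusion.

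There is no serious obstacle: the only point to be careful about is verifying that $1/u$ is genuinely holomorphic on a neighbourhood of $\Sigma$ (not merely on $\Sigma$ itself), which is handled by removing the analytic set $Z$ from $B(a,\rho)$. Everything else is a direct application of Sibony--Wong together with the identity principle, and the uniformity of $\tau'$ follows from the fact that the threshold $c=\tfrac12$ is fixed independently of $u$.
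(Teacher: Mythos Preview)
Your proposal is correct and follows essentially the same route as the paper: apply Sibony--Wong to $h=1/u$ on the union of the good discs, then use analytic continuation of the identity $u\cdot h=1$ to conclude. The only cosmetic differences are that the paper invokes Hurwitz to show $\cali D_u$ is closed (and from this deduces that $1/u$ is holomorphic on a neighbourhood of $\Sigma$), whereas you more directly observe that $B(a,\rho)\setminus Z$ is already such an open neighbourhood; and the paper applies the identity principle via a positive-measure set rather than an open one.
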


\proof If  $u \in {\cali O}(B(a,\rho))$ and ${\cali L}(\cali{D}_u) >\frac{1}{2}$
then $u(a)\ne 0$ and thus, by Hurwitz lemma, the function
 $u$ does not vanish on $\Delta_{\rho}$ for every $\Delta \in \overline{\cali{D}_u}$.
This shows that $\cali D_u$ is closed and that the function
$h:=\frac{1}{u}$ is holomorphic on some neighbourhood of $\Sigma_u:= {\cali D}_u \cap B(a,\rho)$.
By Theorem \ref{Sibony-Wong}, this function extends to $\tilde {h}  \in  {\cali O}(B(a,\tau'\rho))$
where  $0<\tau' <1$ neither depends on $u$ or $\rho$.
Since $u\tilde {h} =uh=1$ on $\cup_{\Delta \in {\cali D_u}} \Delta_{\tau' \rho}$
which has positive Lebesgue measure, one has $u\tilde {h}=1$, and therefore $u$  does not vanish on $B(a,\tau' \rho)$.\qed\\

{\it Step $4:$ Construction of an equilibrium web.}\\

We will use the collection of inverse branches $(\gamma_s^n)_{n\ge 1,  s\in {\cali S}_{\epsilon,r,n}}$
obtained in the former step to build an equilibrium web $\cali M$
for the restricted family $f\lvert_{D_0\times \P^k}$ and some neighbourhood $D_0$ of $\lambda_0$.
The equidistribution of iterated preimages towards the measures $\mu_\lambda$,
as well as the ergodicity of these measures, will play an important role here.\\

Let  $D_0 \times B_0$ be a neighbourhood of $a$ contained in $B(a,\tau r)$, where $\tau >0$ is given by Lemma \ref{L3}. For every $z\in B_0$, 
we define  a sequence of discrete measures $(m_n(z))_n$ on $\mathcal O (D_0,\P^k)$, and  their Cesàro means $({\cali M}^n(z))_n$, by  
$$m^n(z):= d^{-kn} \sum_{s\in \cali S_{\epsilon, r,n}} \delta _{\gamma_{s,z}^n} \; \quad \textrm{and} \quad \;
\cali M^n(z) := \frac{1}{n} \sum_{r=1}^n m^r(z),$$
where $\gamma_s^n(\lambda, z) =: (\lambda, \gamma_{s,z}^n (\lambda))$.
If $\vol$ denotes the Lebesgue measure on $B_0$, we then set
$$ m^n := \frac{1}{\vol(B_0)}\int_{B_0}  m^n(z)\;d\vol(z)\;\textrm{and}\;
\cali M^n :=\frac{1}{\vol(B_0)}\int_{B_0}  {\cali M}^n(z)\;d\vol(z)= \frac{1}{n} \sum_{r=1}^n m^r.$$

The equilibrium web $\cali M$ will be obtained as a rescaled weak limit of $({\cali M}^n)_n$. In the above definitions,
the averaging on $B_0$ is devoted to make $z$ avoid the exceptional set of $f_\lambda$ for Lebesgue-almost every $z$
and every fixed $\lambda$. The Cesàro means will allow us to get the $\cali F$-invariance of $\cali M$.
The compactness of the support of $\cali M$ will be obtained from the following special case of  a classical result of Ueda
(see \cite[Theorem 2.1]{UED}).

 \begin{lemma}\label{remU} The family  of all inverse branches of $f^{\circ n}$ on $B(a,\tau r)$, when $n$ runs over $\N$,
 is a normal family in  $\mathcal O(B(a,\tau r), \P^k)$. In particular, for any  $\alpha>0$, we may reduce $r$ so that $d_{\P^k} (\gamma_s^n(z), \gamma_s^n(a)) \le \alpha
 $ for all points $z\in B(a,\tau r)$ and all branches $\gamma_s^n$ given by Lemma \ref{L3}.
\end{lemma}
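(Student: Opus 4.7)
The plan is to deduce both assertions from Ueda's normality theorem for inverse branches of iterates on $\P^k$, extended parametrically. Since the $M$-component of each $\gamma_s^n$ is the identity, the family at hand reduces to the associated family of $\P^k$-valued holomorphic maps $(\lambda,z)\mapsto \gamma_{s,z}^n(\lambda)$, for which the compactness of $\P^k$ makes normality equivalent, via Arzelà--Ascoli, to uniform equicontinuity on compact subsets of $B(a,\tau r)$.

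I would apply \cite[Theorem 2.1]{UED} directly: for a fixed $\lambda$, it asserts that the union over $n$ of all local inverse branches of $f_\lambda^{\circ n}$, defined on a common open set, forms a normal family. Its proof hinges on a uniform $L^\infty$ bound for the potentials $u_{n,\lambda}$ appearing in the identity $(f_\lambda^{\circ n})^*\omega_{\P^k} = d^n\omega_{\P^k} + dd^c u_{n,\lambda}$, which forces inverse branches to contract the Fubini-Study metric at a controlled rate. These potentials are determined by the Green function of $f_\lambda$, and since the global Green function $g$ is continuous on $M\times\P^k$, the bounds on $u_{n,\lambda}$ are uniform in $\lambda$ on relatively compact subsets of $M$. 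This parametric version of Ueda's estimate yields the required equicontinuity on compact subsets of $B(a,\tau r)$ for the whole family $\{\gamma_s^n\}$.

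To obtain the \emph{in particular} statement, I would use Arzelà--Ascoli to extract uniform equicontinuity at $a$ from normality on a fixed ball $B(a,\tau r_0)$. Given $\alpha>0$ there is then $\delta>0$ such that $d_{\P^k}(\gamma_s^n(z),\gamma_s^n(a))\le\alpha$ whenever $|z-a|<\delta$, for every branch in the family. It then suffices to replace $r$ by $\min(r,\delta/\tau)$ and re-run the construction of Lemmas \ref{L1}--\ref{L3} with this smaller radius: the conclusions of steps 1--3 persist upon shrinking $r$.

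The main obstacle I anticipate is verifying the uniformity in $\lambda$ of the bounds on $u_{n,\lambda}$, i.e., upgrading the pointwise (in $\lambda$) statement of Ueda's theorem to a genuinely parametric statement on the full product ball $B(a,\tau r)$. This should be a routine consequence of the joint continuity of $g$ on $M\times\P^k$ together with the fact that $u_{n,\lambda}$ is the difference $g(\lambda,\cdot) - d^{-n}(g\circ f_\lambda^{\circ n})(\lambda,\cdot)$ of uniformly convergent potentials, but it is the key bridge between the standard one-parameter Ueda theory and the family setting required here.
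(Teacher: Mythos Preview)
Your proposal is correct and follows the same route as the paper, which gives no proof beyond citing \cite[Theorem 2.1]{UED} and calling the lemma a ``special case'' of that result. You actually supply more than the paper does, correctly flagging and resolving the parametric issue (uniformity in $\lambda$ via the joint continuity of the Green function $g$ on $M\times\P^k$) that the paper leaves entirely implicit.
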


We may now state the main result of this step.

\begin{lemma}\label{L4} Let  $D_0 \times B_0$ be any neighbourhood of $a$ contained in $B(a,\tau r)$.
    There exists a positive measure $\widetilde{\cali M}$ on $\mathcal O (D_0,\P^k)$
    such that a subsequence of  $(\cali M^n)_n$ converges to $\widetilde{\cali M}$
    and the probability measure $\displaystyle \cali M:=\frac{\widetilde{\cali M}}{||\widetilde {\cali M}||}$
    is an equilibrium web for the restricted family 
    $f\vert_{D_0\times \P^k}$.
\end{lemma}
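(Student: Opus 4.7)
The plan is to define $\widetilde{\cali M}$ as a subsequential weak limit of $(\cali M^n)_n$ in the space of positive Radon measures on $\mathcal O(D_0,\P^k)$, then to verify, after normalization, the three conditions of Definition \ref{defi web}. The Cesàro structure of $\cali M^n$ will drive the $\cali F$-invariance, while the averaging over $z\in B_0$ combined with the ergodicity of $\mu_\lambda$ will be essential for the fiber condition.

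First I would use Lemma \ref{remU} to confine every branch $\gamma_s^n$, and hence every slice $\gamma_{s,z}^n$, to a fixed compact subset $\cali K\subset\mathcal O(D_0,\P^k)$, so each $\cali M^n$ is supported in $\cali K$ with total mass $\frac{1}{n}\sum_{r=1}^n d^{-kr}|\cali S_{\epsilon,r,r}|\in[1-\sqrt\epsilon,1]$ by Lemma \ref{L2}. Prokhorov then extracts a weakly convergent subsequence $\cali M^{n_j}\to\widetilde{\cali M}$ with $c:=\|\widetilde{\cali M}\|\ge 1-\sqrt\epsilon>0$, so $\cali M:=\widetilde{\cali M}/c$ is a well-defined probability measure carried by $\cali K$, and $\supp\cali M$ is automatically compact. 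For $\cali F$-invariance, Lemma \ref{L3} yields $\cali F(\gamma_{s,z}^n)=\gamma_{i_n(s),z}^{n-1}$ for $s\in\cali S_{\epsilon,r,n}$; since $|i_n^{-1}(t)|\le d^k$ by Lemma \ref{L2}(ii), $\cali F_*m^n(z)\le m^{n-1}(z)$, and the positive defect $e_n(z):=m^{n-1}(z)-\cali F_*m^n(z)$ has mass $\|m^{n-1}\|-\|m^n\|$, so $\sum_{r=1}^n\|e_r\|=\|m^0\|-\|m^n\|\le 1$. The Cesàro identity
\[\cali F_*\cali M^n-\cali M^n=\frac{1}{n}\Bigl(m^0-m^n-\sum_{r=1}^n e_r\Bigr)\]
then forces $\|\cali F_*\cali M^n-\cali M^n\|_{TV}\to 0$, whence $\cali F_*\widetilde{\cali M}=\widetilde{\cali M}$ and $\cali F_*\cali M=\cali M$.

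The main difficulty is the fiber condition $(p_\lambda)_*\cali M=\mu_\lambda$. The identity $p_\lambda\circ\cali F=f_\lambda\circ p_\lambda$ immediately transfers $\cali F$-invariance of $\cali M$ into $f_\lambda$-invariance of $(p_\lambda)_*\cali M$. For the upper bound, $(p_\lambda)_*m^n(z)$ is a subsum of the full preimage measure $d^{-kn}(f_\lambda^n)^*\delta_z$, so averaging over $z\in B_0$ gives $(p_\lambda)_*m^n\le d^{-kn}(f_\lambda^n)^*(\mathds{1}_{B_0}\vol/\vol B_0)$; Dinh--Sibony equidistribution of iterated preimages of a smooth probability measure shows this upper bound converges weakly to $\mu_\lambda$. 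Taking the Cesàro mean and passing to the subsequential limit yields $(p_\lambda)_*\widetilde{\cali M}\le\mu_\lambda$, i.e.\ $c\,(p_\lambda)_*\cali M\le\mu_\lambda$. At this stage the construction has produced only a submeasure of $\mu_\lambda$, and the gap is closed using ergodicity: writing $c\,(p_\lambda)_*\cali M=g\,\mu_\lambda$ with $g\in L^\infty(\mu_\lambda)$ and $0\le g\le 1$, the $f_\lambda$-invariance reads $U_{f_\lambda}^*g=g$ in $L^2(\mu_\lambda)$, where $U_{f_\lambda}$ is the Koopman operator; since $U_{f_\lambda}$ is an $L^2(\mu_\lambda)$-isometry, expanding $\|g-U_{f_\lambda}g\|_{L^2}^2$ and using $U_{f_\lambda}^*g=g$ gives $g=g\circ f_\lambda$, and ergodicity of $\mu_\lambda$ then makes $g$ constant $\mu_\lambda$-a.e. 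Comparing masses pins the constant to be $c$, yielding $(p_\lambda)_*\cali M=\mu_\lambda$. This final step is the heart of the matter: the selection of Lemma \ref{L2} inherently loses a fraction $1-c$ of the mass, and only the ergodicity of $\mu_\lambda$ allows one to recover $\mu_\lambda$ exactly on the fibers.
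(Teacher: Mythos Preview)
Your proof is correct and follows essentially the same line as the paper's: compactness of the supports from Ueda's normality lemma, $\cali F$-invariance via the telescoping Ces\`aro identity (the paper computes the same defect $m^{n-1}-\cali F_*m^n\ge 0$ with mass $d^{-k(n-1)}|\cali S_{\epsilon,r,n-1}|-d^{-kn}|\cali S_{\epsilon,r,n}|$), and the fiber condition by bounding $(p_\lambda)_*\widetilde{\cali M}\le\mu_\lambda$ through equidistribution and then invoking ergodicity. The only differences are cosmetic---you use equidistribution of a smooth measure and the Koopman-adjoint argument where the paper uses a.e.\ equidistribution plus dominated convergence and the extremal-point formulation of ergodicity---and watch the notational collision in $|\cali S_{\epsilon,r,r}|$, where $r$ is simultaneously the fixed radius and your summation index.
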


\begin{proof}
 By definition we have
\begin{equation*}
    \begin{split}
        ||\cali M^n||  = \frac{1}{n} \sum_{r=1}^n ||m^r|| = \frac{1}{n} \sum_{r=1}^n \frac{1}{\vol(B_0)} \int_{B_0} ||m^n(z)|| d\vol(z)
         = d^{-kn} |\cali S_{\epsilon,r,n}|,
    \end{split}
\end{equation*}
which, by the last assertion of Lemma \ref{L2}, yields the existence of $\alpha\in [0,1[$ such that 
$$\lim_n ||\cali M^n|| =1-\alpha.$$

Since the family  of all inverse branches of $f^{\circ n}$ above $B(a,\tau r)$, when $n\in \N$,
is a normal family in  $\mathcal O(B(a,\tau r), \P^k)$,  due to Lemma \ref{remU}, the family of their restrictions to $D_0\times \{z\}$ when $z\in B_0$ is 
normal as well. In other words, there exists a compact subset $\mathcal K$ of the space $(\mathcal O(B(a,\tau r), \P^k), d_\luc)$ such that
$\supp (\cali M^n) \subset \mathcal K$ for all integers $n$.
 By Banach-Alaoglu theorem, the sequence $(\cali M^n)_n$ admits a cluster value $\widetilde{\cali M}$ with support in $\mathcal K$.\\

Let us now show that $\widetilde{\cali M}$ is $\cali F$-invariant.
To this purpose, we first establish the following estimate:
\begin{equation}\label{diff}
a_n(z) := ||m^{n-1}(z) - \cali F_*m^n(z)||  \le
\frac{|\cali S_{\epsilon, r ,n-1}|}{d^{k(n-1)}} - \frac{|\cali S_{\epsilon, r ,n}|}{d^{kn}} =:a_n,
\; \forall z\in B_0.
\end{equation}
We observe that  $\cali F(\gamma_{s,z}^n) = \gamma_{i_n(s),z}^{n-1}$. Indeed, by Lemma \ref{L3} we have, for every $\lambda \in D_0$:
$$ \cali F(\gamma_{s,z}^n)(\lambda) =
f_\lambda(\gamma_s^n(\lambda,z)) = \Pi_{\P^k} \circ f \circ \gamma_s^n(\lambda,z) =
\Pi_{\P^k} \circ \gamma_{i_n(s)}^{n-1}(\lambda, z) = \gamma_{i_n(s),z}^{n-1}(\lambda),$$
where $\Pi_{\P^k} : M \times \P^k \to \P^k$ is the canonical projection.
Hence, by partitioning $\cali S_{\epsilon,r,n}$ on the values of $i_n(s)$ , where the function $i_n$ is defined in Lemma \ref{L2}, one gets

\begin{equation*}
    \begin{split}
        \cali F_* m^n(z) & = d^{-kn} \sum_{s\in \cali S_{\epsilon, r,n}} \cali F_* (\delta_{\gamma_{s,z}^n}) =
        d^{-kn} \sum_{s\in \cali S_{\epsilon, r,n}} \mathcal \delta_{\gamma_{i_n(s),z}^{n-1}}
        = d^{-kn} \sum_{s\in \cali S_{\epsilon, r, {n-1}}} |i_n^{-1}(s)| \delta_{\gamma_{s,z}^{n-1}},
    \end{split}
\end{equation*}
and thus $m^{n-1}(z) - \cali F_*m^n(z)  = d^{-kn} \sum_{s\in \cali S_{\epsilon,r,{n-1}}}(d^k - |i_n^{-1}(s)|) \delta_{\gamma_{s,z}^{n-1}}$. Then, 
as  $|i_n^{-1}(s)| \le d^k$ (see Lemma \ref{L2}), we obtain 

\begin{equation*}
        a_n(z)  \le d^{-kn} \sum_{s\in \cali S_{\epsilon,r,{n-1}}} d^k - |i_n^{-1}(s)| 
         = d^{-k(n-1)}|\cali S_{\epsilon,r,n-1}| - d^{-kn} \sum_{s\in \cali S_{\epsilon,r,{n-1}}}|i_n^{-1}(s)|,
\end{equation*}
which  is the desired inequality, since $\displaystyle \sum_{s\in \cali S_{\epsilon,r,{n-1}}}|i_n^{-1}(s)| = |\cali S_{\epsilon,r,n}|$.\\
It immediately follows from  (\ref{diff}) that
\begin{eqnarray}\label{invar}
||m^{r-1} - \cali F_*m^r|| \le \frac{1}{\vol(B_0)} \int_{B_0}a_r(z) d\vol(z) \le a_r.
\end{eqnarray}

Now, since
$$
    \cali M^n - \cali F_* \cali M^n
    = \frac{1}{n} \sum_{r=1}^n m^r - \frac{1}{n} \sum_{r=1}^n \cali F_* m^r
    = \frac{1}{n} \sum_{r=2}^n (m^{r-1} - \cali F_* m^r) + \frac{m^n-\cali F_* m^1}{n},$$
one deduces from (\ref{invar}) that
\begin{equation*}
    \begin{split}
        ||\cali M^n - \cali F_* \cali M^n|| & \le \frac{1}{n} \sum_{r=2}^n ||m^{r-1} - \cali F_*m^r|| + \frac{2}{n}
        \le \frac{1}{n} \sum_{r=2}^n a_r + \frac{2}{n} \\
        & \le \frac{1}{n} \left( \frac{|\cali S_{\epsilon, r, 1}|}{d^k} - \frac{|\cali S_{\epsilon, r, n}|}{d^{kn}} \right) + \frac{2}{n}
        \le \frac{3}{n}.
    \end{split}
\end{equation*}
Since $\cali F_*$ is continuous for the weak topology,
this proves that $\cali F_* \widetilde{\cali M }= \widetilde{\cali M }$.\\

Now $\cali M:=\frac{\widetilde{\cali M}}{1-\alpha}$ is a compactly supported $\cali F$-invariant probability measure
on  the metric space $(\mathcal O(B(a,\tau r), \P^k), d_\luc)$,
and it remains to prove that  $(p_\lambda)_*(\cali M) = \mu_\lambda$  for any $\lambda \in D_0$.
For $(\lambda,z) \in D_0 \times B_0$ and $n\in \N$, let us set $\mu_\lambda^n(z) := d^{-kn} (f_\lambda^{\circ n})^*\delta_z$.
Then, by definition, we have $0\le (p_\lambda)_*(\cali M^n(z)) \le \frac{1}{n}\sum_{r=1}^n \mu_\lambda^r(z)$ for every $z\in B_0$ and therefore:
\begin{eqnarray}\label{EquiDis}
0\le (p_\lambda)_*(\cali M^n) \le \frac{1}{\vol(B_0)} \int_{B_0} \frac{1}{n} \sum_{r=1}^n \mu_\lambda^r(z)\; d\vol(z).
\end{eqnarray}
As the exceptional set of $f_\lambda$ is a proper pluripolar subset of $\P^k$ \cite{FS2},
it follows from the equidistribution theorem \cite{DS} that 
$(\mu_\lambda^n(z))_n$ is weakly converging to $\mu_\lambda$ for almost every $z\in B_0$.
Then, by  Lebesgue convergence Theorem, the right hand side of (\ref{L1}) is weakly converging to $\mu_\lambda$ and thus
$$ p_{\lambda*}{\widetilde {\cali M}} \le \mu_\lambda,\; \forall \lambda \in D_0.$$
Since $\Vert  p_{\lambda*}{\widetilde {\cali M}}\Vert =\Vert {\widetilde {\cali M}} \Vert = 1-\alpha$
where $\alpha\in [0,1[$ we are done if $\alpha=0$. Otherwise this allows to write
$\mu_\lambda = (1-\alpha)\frac{(p_\lambda)_*(\widetilde{\cali M})}{1-\alpha} +
\alpha \frac{\mu_\lambda - (p_\lambda)_*(\widetilde{\cali M})}{\alpha}$
which, by the ergodicity of $\mu_\lambda$, implies that
$\frac{(p_{\lambda*}) (\widetilde{\cali M})}{1-\alpha}= p_{\lambda *}({\cali M})=\mu_\lambda$, as well.
\end{proof}

{\it Step $5:$ The equilibrium web $\cali M$ is acritical.}\\

We first show that we can reduce $r$ and the neighbourhood $D_0\times B_0$ of $a$ so that
$D_0\times B_0\subset B(a, \tau'\tau r)$, where $0<\tau'<1$ is given by
Lemma \ref{CSW}, and, moreover, for every $p\ge 0$ and every branch $\gamma_s^n$ used in the definition of $\cali M$,
there exists a single holomorphic function  defining $f^{\circ p}(Y)$
on some open set containing  $ \gamma_s^n(B(a,\tau r))$.

To do that, 
 we first  reduce $r$ so  that $\gamma_s^n(B(a,\tau r))$ is contained in the domain of definition of some holomorphic section of the
canonical projection $\pi: \C^{k+1}\setminus\{0\} \to \P^k$
for any  $\gamma_s^n$. It suffices for that to use Lemma \ref{remU} with $\alpha>0$ small enough.
Then we shrink $D_0\times B_0$ so that $D_0\times B_0\subset B(a, \tau'\tau r)$, 
and that the family $f\vert_{D_0\times \P^k}$ can be lifted to some holomorphic family 
$F\vert_{D_0\times \C^{k+1}}$ of non-degenerate
homogeneous polynomials of $\C^{k+1}$.
Let $C_F$ be the critical set of $F$.
As the maps $F^{\circ p} : D_0\times \C^{k+1}\to  D_0\times \C^{k+1}$ are proper,
the function $\varphi_p(z) := \Pi_{i=1}^{d^k} \Jac ~F(\rho_i(z))$,
where $\{\rho_1(z),\dots,\rho_{d^k}(z)\}$ is the set of preimages of $z$ counted with multiplicities,
is a holomorphic defining function for the analytic hypersurface $F^{\circ p}(C_F)$ on $D_0 \times \C^{k+1}$.
It follows that for any open set $U$ in $\P^k$ on which $\pi: \C^{k+1}\setminus\{0\} \to \P^k$ admits a holomorphic section,
each analytic set $f^{\circ p}(C_f)$ is defined by a single holomorphic function on $D_0\times U$.\\

The following fact will be crucial.
 Recall that $\cali D$ is the set of one-dimensional lines $\Delta \subset \C^{m+k}$ passing through $a$.

\begin{lemma}\label{LemmaSeries}
If $B(a,\rho)\subset \Omega(R_f)$ then the series $\sum_{l\ge 0}  \int_{\cali D}\Vert \mathds{1}_{B(a, \rho)} R_{l,f} \wedge [\Delta] \Vert$ is converging.
\end{lemma}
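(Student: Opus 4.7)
The plan is to exploit that, on relatively compact subsets of $\Omega(R_f)$, the total current $R_f=\sum_{l\ge 0} R_{l,f}$ converges to a positive closed $(1,1)$-current of finite mass, and then to apply a Crofton-type slicing inequality to bound the $\cali L$-average of the slice masses.

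Step 1 (finite total mass). Since $\Omega(R_f)$ is open and contains $B(a,\rho)$, one may, after shrinking $\rho$ by an arbitrarily small amount, assume $\overline{B(a,\rho)}\subset\Omega(R_f)$ and fix $\rho_1>\rho$ with $B(a,\rho_1)\subset\Omega(R_f)$. By compactness, the set $\overline{B(a,\rho_1)}$ is covered by finitely many of the neighbourhoods $U_j$ appearing in the definition of $\Omega(R_f)$, on each of which the mass series for $R_f$ converges; summing then yields $\sum_{l\ge 0}\lVert R_{l,f}\rVert_{B(a,\rho_1)}<+\infty$, so that $R_f$ is a well-defined positive closed $(1,1)$-current of finite mass on $B(a,\rho_1)$.

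Step 2 (Crofton inequality for slicing by lines). I invoke a Crofton-type slicing inequality in the same spirit as Lemma \ref{1.4.9}: there exists a constant $C=C(\rho,\rho_1)$ such that, for every positive closed $(1,1)$-current $S$ on $B(a,\rho_1)$,
$$\int_{\cali D}\lVert\mathds{1}_{B(a,\rho)}\,S\wedge [\Delta]\rVert\, d\cali L(\Delta)\,\le\, C\,\lVert S\rVert_{B(a,\rho_1)}.$$
This ultimately comes from the identification $\int_{\cali D}[\Delta]\, d\cali L(\Delta)=c\,(dd^c\log\lVert z-a\rVert)^{m+k-1}$ on $\C^{m+k}\setminus\{a\}$, combined with a standard mass comparison on an annular shell to absorb the logarithmic singularity at $a$ into the trace mass of $S$ on $B(a,\rho_1)$.

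Step 3 (summation). Applying Step 2 to the positive closed $(1,1)$-current $R_f$ on $B(a,\rho_1)$, and exchanging the sum with the integral over $\cali D$ (legitimate by Tonelli, the summands being positive, together with the fact that the slice of the sum coincides $\cali L$-a.e.\ with the sum of the slices), we obtain
$$\sum_{l\ge 0}\int_{\cali D}\lVert\mathds{1}_{B(a,\rho)}R_{l,f}\wedge[\Delta]\rVert\, d\cali L(\Delta)=\int_{\cali D}\lVert\mathds{1}_{B(a,\rho)}R_f\wedge[\Delta]\rVert\, d\cali L(\Delta)\le C\,\lVert R_f\rVert_{B(a,\rho_1)}<+\infty,$$
as required.

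The main obstacle is the precise Crofton inequality of Step 2: slicing by complex lines through the \emph{fixed} point $a$ (rather than by generic hyperplanes) forces one to control the effect of the logarithmic singularity of the averaging form $(dd^c\log\lVert z-a\rVert)^{m+k-1}$ against the trace mass of $S$, and one must also verify that the slice of the limiting sum $R_f$ agrees $\cali L$-a.e.\ with the sum of slices — these are standard but delicate pluripotential points, already implicit in the proof of Lemma \ref{1.4.9}.
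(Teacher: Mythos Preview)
Your proposal is correct and follows the same overall Crofton strategy as the paper: both identify $\int_{\cali D}[\Delta]$ with $(dd^c\log|z-a|)^{m+k-1}$ and bound the sum of slice masses by the mass of a single wedge product with $R_f$. The one substantive difference is in how the logarithmic singularity at $a$ is handled. You treat it by a general Crofton-type inequality valid for arbitrary positive closed $(1,1)$-currents, absorbing the singularity via an annular-shell mass comparison between $B(a,\rho)$ and $B(a,\rho_1)$. The paper instead exploits a fact already built into the setup of Section~\ref{SectionProof}: the point $a$ was chosen so that $a\notin\cup_{n\ge 0} f^{\circ n}(Y)$, hence each $R_{l,f}$ vanishes near $a$ and has a \emph{bounded} local potential there. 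This makes the wedge product $R_{l,f}\wedge(dd^c\log|z-a|)^{m+k-1}$ well defined on the whole ball (including at $a$) by standard Bedford--Taylor theory, and yields the Fubini identity $\int_{\cali D} R_{l,f}\wedge[\Delta]=R_{l,f}\wedge\int_{\cali D}[\Delta]$ directly, with no need for an auxiliary larger ball or an annular estimate. Your route is slightly more general (it would apply even without the condition $a\notin\cup_n f^{\circ n}(Y)$), at the cost of invoking a sharper slicing inequality whose proof you only sketch; the paper's route is shorter precisely because it cashes in on that earlier normalization of $a$.
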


\proof Recall the Crofton formula $\int_{\cali D} [\Delta] = (dd^c \log \vert z-a\vert)^{m+k-1}$
(see for example \cite[Corollary III.7.11]{DEM}).
In particular, since the $p.s.h$  function $\log \vert z-a\vert$ is locally bounded on
$\C^{m+k}\setminus\{a\}$,
the currents $u\int_{\cali D} [\Delta]$ and
$dd^c u\wedge\int_{\cali D} [\Delta]$ are well defined
for any $p.s.h$ function $u$  (see \cite[Proposition III.4.1]{DEM}).
Taking for $u$ any local potential of $R_{l,f}$ and exploiting the fact that $u$ is bounded in a neighborhood of $a$,
one may check that $R_{l,f} \wedge[\Delta]$ is $\cali L$-integrable, and that
$\int_{\cali D} R_{l,f} \wedge[\Delta] = R_{l,f}\wedge \int_{\cali D} [\Delta]$.
We then have for any $N \in  \N$:
\begin{eqnarray*}
&\;&\sum_{l=0}^N \int_{\cali D}  \Vert \mathds{1}_{B(a, \rho)} R_{l,f} \wedge [\Delta] \Vert
= \sum_{l=0}^N \Vert  \int_{\cali D} \mathds{1}_{B(a, \rho)} R_{l,f} \wedge  [\Delta] \Vert
= \sum_{l=0}^N \Vert \mathds{1}_{B(a, \rho)} R_{l,f} \wedge  \int_{\cali D} [\Delta] \Vert \\
&=&  \Vert ( \sum_{l=0}^N \mathds{1}_{B(a, \rho)} R_{l,f}) \wedge  \int_{\cali D} [\Delta] \Vert 
\le  \Vert ( \sum_{l\ge 0} \mathds{1}_{B(a, \rho)} R_{l,f}) \wedge  \int_{\cali D} [\Delta] \Vert 
=  \Vert  \mathds{1}_{B(a, \rho)} R_{f} \wedge  \int_{\cali D} [\Delta] \Vert .
\end{eqnarray*}
The assertion follows.
 \qed\\

We have to show that ${\cali M}(\cali J_s)=0$ where $\cali J_s$ is the set of $\gamma \in \cali J$  whose graphs $\Gamma_\gamma$ do  meet the
grand orbit of $C_f$, or equivalently $Y$, by $f$ (see definition \ref{defi web}). The following lemma reduces the problem to estimating the mass of some open subsets of $(\cali J, d_{\textrm{luc}})$.
 For any integer $p$, we define $$Y_p:=\{\gamma \in {\cali J}\;\colon\; \Gamma_\gamma \cap f^{\circ p} (Y) \ne \emptyset\;\textrm{and}\;  \Gamma_\gamma \not \subset f^{\circ p} (Y)\}.$$

\begin{lemma}\label{lemmaYp}
 If $\cali M (Y_p)=0$ for all $p$, then ${\cali M}(\cali J_s)=0$.
\end{lemma}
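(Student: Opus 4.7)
The plan is to decompose $\cali J_s$ along the grand orbit structure and exploit $\cali F$-invariance of $\cali M$ to reduce to the sets $Y_p$ together with an easier ``fully contained'' case. Concretely, observe that if $(\lambda,z)\in \Gamma_\gamma \cap f^{-m}(S)$ for some $S\subset M\times \P^k$, then $z=\gamma(\lambda)$ and $(\lambda,f_\lambda^{\circ m}(\gamma(\lambda)))=\cali F^m(\gamma)(\lambda)$ lies in $S$, i.e. $\Gamma_{\cali F^m(\gamma)}\cap S\neq\emptyset$. Since $C_f\subset f^{-1}(Y)$, the grand orbit of $C_f$ is contained in $\bigcup_{m\ge 0,p\ge 0} f^{-m}(f^{\circ p}(Y))$, so the equivariance above yields
\[
\cali J_s \;\subset\; \bigcup_{m\ge 0,\, p\ge 0} \cali F^{-m}(A_p), \qquad A_p := \{\gamma\in \cali J \;\colon\; \Gamma_\gamma \cap f^{\circ p}(Y)\ne\emptyset\}.
\]

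Next I would split each $A_p$ into $A_p = Y_p \cup B_p$, where
\[
B_p := \{\gamma\in \cali J \;\colon\; \Gamma_\gamma \subset f^{\circ p}(Y)\}.
\]
By assumption $\cali M(Y_p)=0$ for every $p$, so it only remains to handle $B_p$. Here I would use property (2) of an equilibrium web: for any fixed $\lambda\in D_0$, if $\gamma\in B_p$ then $\gamma(\lambda)\in f^{\circ p}(Y)_\lambda$, so $B_p \subset p_\lambda^{-1}(f^{\circ p}(Y)_\lambda)$. The fiber $f^{\circ p}(Y)_\lambda$ is a proper analytic subset of $\P^k$, hence pluripolar, and the equilibrium measure $\mu_\lambda$ does not charge pluripolar sets. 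Therefore
\[
\cali M(B_p) \;\le\; \cali M\bigl(p_\lambda^{-1}(f^{\circ p}(Y)_\lambda)\bigr) \;=\; (p_\lambda)_*\cali M \bigl(f^{\circ p}(Y)_\lambda\bigr) \;=\; \mu_\lambda\bigl(f^{\circ p}(Y)_\lambda\bigr) \;=\; 0.
\]

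Finally I would conclude by $\cali F$-invariance: $\cali M(\cali F^{-m}(A_p)) = \cali M(A_p)=\cali M(Y_p)+\cali M(B_p)=0$, and countable subadditivity gives $\cali M(\cali J_s)=0$. There is no real obstacle in this argument; the one point to double-check is measurability of $B_p$, which is immediate since $B_p = \bigcap_{\lambda\in M} p_\lambda^{-1}(f^{\circ p}(Y)_\lambda)$ is a closed subset of $\cali O(M,\P^k)$ (it suffices to intersect over a countable dense set of parameters). The only structural ingredients used are the $\cali F$-equivariance of $\gamma\mapsto \Gamma_\gamma$, the identity $(p_\lambda)_*\cali M=\mu_\lambda$, and the fact that $\mu_\lambda$ is non-pluripolar.
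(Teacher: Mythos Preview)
Your proof is correct and follows essentially the same route as the paper: decompose $\cali J_s$ along the grand orbit, use $\cali F$-invariance to reduce to $\widetilde{Y_p}=A_p$, and split $A_p$ into $Y_p$ and the ``fully contained'' part $B_p=\widetilde{Y_p}\setminus Y_p$. The only difference is that the paper dispatches $\cali M(\widetilde{Y_p}\setminus Y_p)=0$ by citing \cite[end of proof of Corollary 1.7]{BBD}, whereas you spell out the argument directly via $(p_\lambda)_*\cali M=\mu_\lambda$ and the fact that $\mu_\lambda$ does not charge the proper analytic set $f^{\circ p}(Y)_\lambda$; this is precisely the content of that citation, so your version is a self-contained rendering of the same proof.
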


\proof Note that $\cali J_s=\cup_{n\in \N} {\cali J}_s^n$,
where $ {\cali J}_s^n:=\{\gamma \in {\cali J}\;\colon\; \Gamma_\gamma \cap (f^{\circ n})^{-1}(\cup_{p\in \N}f^{\circ p} (Y)) \ne \emptyset\}$
and that ${\cali J}_s ^0 = \cup _{p\in \N} \widetilde{Y_p}$ where $ \widetilde{Y_p}:=\{ \gamma \in \cali J\;\colon\; 
\Gamma_\gamma \cap f^{\circ p} (Y)\ne \emptyset\}$.
As the measure $\cali M$ is $\cali F$-invariant,
the conclusion follows immediately from the inclusion $ {\cali J}_s^n \subset ({\cali F}^{\circ n})^{-1} ({\cali J}_s^0)$
and the fact that ${\cali M}( \widetilde{Y_p} \setminus Y_p)=0$ (see \cite[end of proof of Corollary 1.7]{BBD}).\qed\\

Let $p\ge 0$ be a fixed integer. It follows from Hurwitz lemma that $Y_p$ is an open subset of $({\cali O}(D_0, \P^k), d_{\textrm{luc}})$. We will show 
that $\lim_n {\cali M}^n (Y_p)=0$.

By the very definition of $\cali M^n$ and since $D_0\times B_0 \subset B(a, \tau'\tau r)$, we have:
\begin{eqnarray}\label{MnYp1}
\cali M^n(Y_p)\le d^{-kn} \;|\{s\in{\cali S}_{\epsilon,r,n}\;\colon\; \gamma_s^n(B(a,\tau'\tau r))\cap f^{\circ p}(Y)\ne \emptyset\}|.
\end{eqnarray} 
Let $\varphi_{p,n}$ be a holomorphic function defining $f^{\circ p}(Y)$ on $ \gamma_s^n(B(a,\tau r))$ and set
\begin{eqnarray*}
    \{s_1,s_2,\cdots,s_{N_n}\} := \{s\in{\cali S}_{\epsilon,r,n}\;\colon\; 0\in \varphi_{p,n} \circ \gamma_s^n (B(a,\tau'\tau r))\}.
\end{eqnarray*}

Then the estimate (\ref{MnYp1}) can be written as 
\begin{eqnarray}\label{MnYp2}
\cali M^n(Y_p)\le d^{-kn} N_n.
\end{eqnarray} 
For every $1\le k \le N_n$, let us set
\begin{eqnarray*}
E_k:=\{\Delta \in \cali D\;\colon\; 0 \in \varphi_{p,n} \circ \gamma_{s_k}^n (\Delta_{\tau r})\}.
\end{eqnarray*}
Then $\sum_{k=1}^{N_n} \mathds{1}_{E_k}(\Delta)$ is the number of inverse branches of $f^{\circ n}$ whose restrictions to $\Delta_{\tau r}$
meet $f^{\circ p}(Y)$ and thus, for a generic $\Delta$ in $\cali D$, we have
\begin{eqnarray}\label{MnYp3}
\sum_{k=1}^{N_n} \mathds{1}_{E_k}(\Delta) \le \vert f^{\circ (n+p)}(Y) \cap \Delta_{\tau r}\vert \le d^{k(n+p)} \Vert R_{n+p,f} \wedge [\Delta_{\tau r}]\Vert.
\end{eqnarray} 
On the other hand, since $\varphi_{p,n} \circ \gamma_s^{n_k}$ vanishes on $B(a, \tau'\tau r)$ for every $1\le k\le N_n$, it follows from Lemma \ref{CSW} that
$\cali L(E_k) >\frac{1}{2}$ which yields:

\begin{eqnarray}\label{MnYp4}
\frac{N_n}{2} \le  \sum_{k=1}^{N_n} \cali L (E_k) = \int_{\cali D} \sum_{k=1}^{N_n} \mathds{1}_{E_k}(\Delta).
\end{eqnarray}

By combining (\ref{MnYp2}), (\ref{MnYp3}), and (\ref{MnYp4}), we obtain that
$$\frac{1}{2} \cali M ^n (Y_p)  \le d^{kp} \int_{\cali D} \Vert R_{n+p,f} \wedge [\Delta_{\tau r}]\Vert $$
which, according to Lemma \ref{LemmaSeries}, implies that $\lim_n \cali M^n (Y_p) =0$.

Since ${\cali M}^n$ is compactly supported in  $({\cali O}(D_0, \P^k), d_{\textrm{luc}})$ and is a weak limit of the sequence
$(\frac{1}{1-\alpha}{\cali M}^n)_n$ (see Lemma \ref{L4}), it follows that $\cali M (Y_p)=0$. By Lemma \ref{lemmaYp}, the equilibrium web
$\cali M$ is acritical. \qed\\

\printbibliography

\end{document}